\documentclass{amsart}
\usepackage{amsmath,amsfonts, amsthm,amssymb,enumerate,comment}

\newtheorem{theorem}{Theorem}
\newtheorem{proposition}{Proposition}
\newtheorem{example}{Example}
\newtheorem{lemma}{Lemma}
\newtheorem{corollary}{Corollary}
\newtheorem{remark}{Remark}
\newtheorem{definition}{Definition}

\newcommand{\R}{\mathbb{R}}

\newcommand{\sign}{\mathrm{sign}}

\newcommand{\E}{\mathbb{E}}
\newcommand{\Prob}{\mathbb{P}}

\newcommand{\sL}{\mathcal{L}}
\newcommand{\sB}{\mathcal{B}}
\newcommand{\sF}{\mathcal F}
\newcommand{\sI}{\mathcal I}
\newcommand{\sC}{\mathcal C}
\newcommand{\sG}{\mathcal{G}}
\newcommand{\sV}{\mathcal{V}}

\begin{document}

\title{Integrability of solutions of the Skorokhod Embedding Problem}

\date{\today}
\author{David Hobson}

\begin{abstract}

Suppose $X$ is a time-homogeneous diffusion on an interval $I^X \subseteq \R$ and 
let $\mu$ be a probability measure on $I^X$. Then $\tau$ is a solution of the 
Skorokhod embedding problem (SEP) for $\mu$ in $X$ if $\tau$ is a stopping time and 
$X_\tau \sim \mu$.

There are well-known conditions which determine whether there exists a solution 
of the SEP for $\mu$ in $X$. We give necessary and sufficient conditions for 
there to exist an integrable solution. Further, if there exists a solution of 
the SEP then there exists a minimal solution. We show that every minimal 
solution of the SEP has the same first moment.

When $X$ is Brownian motion, every integrable embedding of $\mu$ is minimal.  
However, for a general diffusion there may be integrable embeddings which are 
not minimal.

\end{abstract}

\maketitle

%%%%%%%%% content.txt

\section{Introduction}
\label{sec:intro}

Let $X$ be a regular, time-homogeneous diffusion on an interval $I^X\subseteq \R$, with $X_0 = x \in int(I^X)$, and let $\mu$ be a probability measure on $\overline{I^X}$. Then $\tau$ is a solution of the Skorokhod embedding problem (Skorokhod~\cite{Skorokhod:65}) for $\mu$ in $X$ if $\tau$ is a stopping time and $X_\tau \sim \mu$.
We call such a stopping time an embedding (of $\mu$ in $X$).

For a general Markov process 
Rost~\cite{Rost:71} gives necessary and sufficient conditions which determine whether a solution to the Skorokhod embedding problem (SEP) exists for a given target law.
The conditions are expressed in terms of the potential. When applied to Brownian motion (where we include the case of Brownian motion on an interval subset of $\R$, provided the process is absorbed at finite endpoints)
these conditions lead to a characterisation
of the set of measures which can be embedded in Brownian motion.
Then, in the case of a regular, one-dimensional, time-homogeneous diffusion with absorbing endpoints, necessary and sufficient conditions for the existence of a solution to the SEP can be derived via a change of scale. Let $s$ be the scale function of $X$; then $Y=s(X)$ is a local martingale, and in particular a time-change of Brownian motion. Further, let $I=s(I^X)$ be the state space of $Y$. Then the set of measures for which a solution of the SEP exists depends on both $I$ and the relationship between the starting value of $Y$ and the mean of the image under $s$ of the target law, see Theorem~\ref{thm:existence} below.

Apart from the existence result above, most of the literature on the SEP has concentrated on the case where $X$ is Brownian motion in one dimension. Exceptions include Rost~\cite{Rost:71} as mentioned above, Bertoin and LeJan~\cite{BertoinLeJan:92} who consider embeddings in any time-homogeneous process with a well-defined local time, Grandits and Falkner~\cite{GranditsFalkner:00} (drifting Brownian motion), Hambly {\em et al}~\cite{HamblyKerstingKyprianou:02} (Bessel process of dimension 3) and Pedersen and Peskir~\cite{PedersenPeskir:01} and Cox and Hobson~\cite{CoxHobson:04} (these last two consider embeddings in a general time-homogenenous diffusions).

In the Brownian setting many solutions of the SEP have been described; see Obloj~\cite{Obloj:04} or Hobson~\cite{Hobson:11} for a survey. Given there are many solutions, it is possible to look for criteria which characterise `small' or `good' solutions. In both the Brownian case and more generally, there is a natural class of good solutions of the SEP, namely the minimal embeddings (Monroe~\cite{Monroe:72}). An embedding $\tau$ is minimal if whenever $\sigma \leq \tau$ is another embedding (of $\mu$ in $X$) then $\sigma = \tau$ almost surely.

Another criteria for a good solution might be that it is integrable, or as small as possible in the sense of expectation. In this article we are interested in the integrability or otherwise of solutions of the SEP, and the relationship between integrability and minimality in the case where $X$ is a time-homogeneous diffusion in one dimension.

Consider the case where $X$ is Brownian motion null at zero and write $W$ for $X$. By the results of
Rost~\cite{Rost:71}
there exists a solution of the SEP for $\mu$ in $W$ on $\R$ for {\em any} measure $\mu$ on $\R$. If we require integrability of the embedding then the story is also well-known:

\begin{theorem}%Shepp~\cite{Shepp:67}, 
[Monroe~\cite{Monroe:72}]
\label{thm:brownian}
There exists an integrable solution of the SEP for $\mu$ in $W$ if and only if $\mu$ is centred and in $L^2$.
 Further, in the case of centred square-integrable target measures, $\tau$ is minimal for $\mu$ if and only if $\tau$ is an embedding of $\mu$ and $\E[\tau] < \infty$.
\end{theorem}

Our goal in this paper is to consider the case where $X$ is a regular
time-homogeneous diffusion on an interval $I^X$
with absorbing endpoints. Let $x \in int(I^X)$ denote the initial value of $X$, let $m^X$ denote the speed measure, and $s^X$ the scale function. Let $\mu$ be a probability measure on $\overline{I^X}$.

Our main result is as follows:
\begin{theorem}
\label{thm:general}
There exists an integrable solution of the SEP for $\mu$ in $X$ if and only if $E_X(x;\mu)<\infty$ where $E_X(x;\mu)$ is defined in (\ref{eqn:EXdef}) below.
Further, in the case where $E_X(x;\mu)<\infty$ then $\tau$ is minimal for $\mu$ if and only if $\tau$ is an embedding and $\E[\tau] = E_X(x;\mu)$.
\end{theorem}

In the Brownian case there is a dichotomy, and for any embedding either $\E[\tau] = \int x^2 \mu(dx)$ or $\E[\tau] = \infty$, and so if the target law is square integrable then minimality of an embedding is equivalent to integrability. This is not true in general for diffusions: we can have integrable embeddings which are not minimal. The converse is also true: both in the Brownian case and more generally we can have minimal embeddings which are not integrable. This will be the case if $E_X(x;\mu)=\infty$.

We close the introduction by considering a quartet of illuminating and 
motivating examples.

\begin{example}
\label{ex:ABM}
Let $Z = (Z_t)_{t \geq 0}$ be Brownian motion on $\R_+$ absorbed at zero, and with $Z_0 = z>0$. Then there exists an embedding of $\mu$ if and only if $\int x \mu(dx) \leq z$. Moreover, there exists an integrable embedding of $\mu$ in $Z$ if and only if $\int x \mu(dx) = z$ and $\int x^2 \mu(dx) < \infty$ and then an embedding $\tau$ is minimal if and only if $\E[\tau] < \infty$ if and only if $\E[\tau] = \int (x-z)^2 \mu(dz)$. Note that $Z$ is a supermartingale so the necessity of $\int x \mu(dx) \leq z$ is clear.
\end{example}

\begin{example}
\label{ex:DBM}
Let $V = (V_t)_{t \geq 0}$ be upward drifting Brownian motion with $V_0 = v$. 
In particular, suppose $V$ solves $V_t = v + a W_t + b t$ with $b>0$ and 
$W_0=0$, and set 
$\beta = 2b/a^2$. Then there exists an embedding of $\mu$ if and only if $\int e^{- \beta(u-v)}\mu(du) \leq 1$. (Upward drifting Brownian motion is transient to $+\infty$ and so there will be an embedding of $\mu$ provided $\mu$ does not place too much mass at values far below $v$.) Moreover, there exists an integrable embedding of $\mu$ if and only if $\int e^{- \beta(u-v)} \mu(du) \leq 1$ and $\int u^+ \mu(du) < \infty$. If there exists an integrable embedding then an embedding $\tau$ is minimal if and only if $\E[\tau] = E(v;\mu)$ where
\[ E(v;\mu) = \frac{1}{b} \left(\int u \mu(du) - v \right) < \infty. \]
\end{example}

\begin{example}
\label{ex:Bes3}
Let $P= (P_t)_{t \geq 0}$ be a Bessel process of dimension 3 started at $P_0=p>0$.
Then there exists an embedding of $\mu$ if and only if $\int x^{-1}
\mu(dx) \leq p^{-1}$. Moreover, there exists an integrable embedding of $\mu$ if and only if $\int x^{-1}
\mu(dx) \leq p^{-1}$ and $\int x^2 \mu(dx) < \infty$ and then an embedding $\tau$ is minimal for $\mu$ if and only if $\tau$ is an embedding and $\E[\tau] = E(p;\mu)$ where
\begin{equation}
E(p;\mu)= \frac{1}{3} \int x^2 \mu(dx) - \frac{p^2}{3}
\end{equation}
Note that a Bessel process is transient to infinity, and so for there to exist an embedding of $\mu$, $\mu$ cannot place too much mass near zero. For an integrable embedding then in addition we cannot have too much mass far from zero as the process takes a long time to get there. Note also that $Y=P^{-1}$ is a diffusion in natural scale and that $Y$ is the classical Johnson-Helms example of a local martingale which is not a martingale.

The results extend to the case $p=0$. Then any $\mu$ on $\R^+$ can be embedded in $P$. There exists an integrable embedding if and only if $\mu$ is square integrable.
\end{example}

\begin{example}
\label{ex:intnonmin}
Let $Q= (Q_t)_{t \geq 0}$ solve $dQ_t = (1 + Q_t^2) dW_t$ subject to $Q_0=0$. 
Let $\mu = \frac{1}{2} \delta_1 + \frac{1}{2} \delta_{-1}$. Let $\tau = \max 
[ \inf \{ u : Q_u = - 1 \} , \inf \{ u : Q_u = 1 \} ]$. Then $\tau$ is an 
embedding of $\mu$ and $\tau$ is integrable, but $\tau$ is not minimal.
\end{example} 

\section{Preliminaries, notation and the switch to natural scale}
Let $X$ be a time-homogeneous diffusion with state space $I^X$, started at $x 
\in int(I^X)$, and suppose that if $X$ can reach an endpoint of $I^X$, then 
such an endpoint is absorbing.
Suppose that $X$ is regular, ie for all $x' \in int(I^X)$ and $x'' \in I^X$, $\Prob^{x'}( H_{x''} < \infty ) > 0$. Then, see Rogers and Williams~\cite{RogersWilliams:00b} or Borodin and Salminen~\cite{BorodinSalminen:02},  $X$ has a scale function $s$ and $Y = s(X)$ is a diffusion in natural scale on the interval $I=
s(I^X)$. Denote the endpoints of $I$ by $\{\ell,r\}$ and suppose $y = s(x)$ lies in $(\ell,r)$. Then we have
$-\infty  \leq \ell < y < r \leq \infty$.

For a diffusion process $Z$ let $H^Z_z = \inf \{ s \geq 0 : Z_s = z \}$, and $H^Z_{a,b} = H^Z_a \wedge H^Z_b$. Where the process $Z$ involved is clear, the superscript may be dropped.

We have that
$(Y_{t \wedge H^Y_{\ell,r}})_{t \geq 0}$ is a continuous local martingale. In particular, we can write $Y_t = W_{\Gamma_t}$ for some Brownian motion $W$ started at $y$ and a strictly increasing time-change $\Gamma$. We have already seen from Example~\ref{ex:Bes3} that $Y$ may easily be a strict local martingale.

Let $\mu$ be a law on $\overline{I^X}$ and define $\nu = \mu \circ s^{-1}$ so that for a Borel subset of $\overline{I}$, $\nu(A) = \mu ( s^{-1}(A))$. Then $\tau$ is an embedding of $\mu$ in $X$ if and only if $\tau$ is an embedding of $\nu$ in $Y$. Moreover, the integrability of $\tau$ is also unaffected by a change of scale, and thus we lose no generality in assuming that our diffusion is in natural scale. Minimality is another property which is preserved under a change of scale.

Henceforth, therefore, we assume we are given a local martingale diffusion $Y$ 
on $I$ with $Y_0=y \in int(I)$ and target measure $\nu$ on $\overline{I}$. 
Provided $\nu \in L^1$, write $\overline{\nu}$ for the mean of $\nu$, with a 
similar convention for other measures. It follows from our assumption on $X$ 
that if $Y$ 
can reach an 
endpoint $\ell$ or $r$ of $I$ in finite time then that endpoint is absorbing. 
The diffusion $Y$ in natural scale is characterised by its speed measure which 
we denote by $m$. Recall that if $Y$ solves the SDE $dY_t = \eta(Y_t) dB_t$ for 
a continuous diffusion coefficient $\eta$ then $m(dy) = dy/\eta(y)^{2}$.

\begin{theorem}[Pedersen and Peskir~\cite{PedersenPeskir:01}, Cox and Hobson~\cite{CoxHobson:04}]
\label{thm:existence}
\begin{enumerate}
\item[{\rm (i)}]
Suppose $I$ is a finite interval. Then $\nu$ can be embedded in $Y$ if and only if $y = \int x \nu(dx)$.
\item[{\rm (ii)}]
Suppose $I=(\ell, \infty)$ or $[\ell,\infty)$ for $\ell > - \infty$. Then $\nu$ can be embedded in $Y$ if and only if $y \geq
\int x \nu(dx)$.
\item[{\rm (iii)}]
Suppose $I=(-\infty,r)$ or $(-\infty,r]$ for $r<\infty$.
Then $\nu$ can be embedded in $Y$ if and only if $y \leq
\int x \nu(dx)$.
\item[{\rm (iv)}]
Suppose $I=\R$.
Then $\nu$ can be embedded in $Y$ if and only if $\nu$ is a measure on $\R$.
\end{enumerate}
\end{theorem}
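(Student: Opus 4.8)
The plan is to organise the argument around the representation of $Y$ as a time change $Y_t=W_{\Gamma_t}$ of a Brownian motion $W$ started at $y$ — on all of $\R$ when $I=\R$, and otherwise on $I$ with absorption at any accessible finite endpoint — where $\Gamma$ is the right‑continuous inverse of the speed‑measure additive functional $A_t=\int_I L^W_t(a)\,m(da)$. Step one reduces the embedding problem for $Y$ to that for $W$; step two proves necessity by a (super/sub)martingale argument directly on $Y$; step three proves sufficiency for $W$ by classical constructions, separating a ``balanced'' case from a ``non‑balanced'' one. Throughout, one should keep in mind a small convention point: at a finite endpoint of $I$ that is inaccessible for $Y$ (entrance or natural) one reads $Y_\tau$ with $\tau=\infty$ allowed and $Y_\infty:=\lim_t Y_t$; the statements below are unaffected by this.

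For the reduction, two facts are needed. Regularity of $X$ forces $m$ to charge every nonempty open subinterval of $I$, so $W$ and $Y$ reach exactly the same points; and if $\rho$ is a stopping time for $W$ with $\rho<\infty$ a.s.\ then $\tau:=A_\rho$ is a stopping time for $Y$ with $Y_\tau=W_\rho$, and $\tau<\infty$ a.s.\ because $A_\rho=\int L^W_\rho(a)\,m(da)$ is the integral of a continuous compactly supported function against a Radon measure. Conversely, an embedding of $\nu$ in $Y$ is, via $\Gamma$, an embedding of $\nu$ in $W$. The one place requiring genuine care is a finite endpoint of $I$ accessible for $W$ but not for $Y$: there $\nu$ cannot charge the endpoint (since $Y$ cannot be absorbed there), and one must check that the $W$‑embedding is completed strictly before $W$ reaches it, so that $\min_{s\le\rho}W_s$ stays off that endpoint and $A_\rho$ remains finite despite $m$ blowing up there.

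For necessity, in every case $N_t:=Y_{t\wedge\tau\wedge H_{\ell,r}}$ is a continuous local martingale started at $y$, and since accessible finite endpoints are absorbing, $N_t\to Y_\tau$ a.s. In case (i) $Y$ is bounded, so $N$ is uniformly integrable and $\int x\,\nu(dx)=y$. In case (ii), $N_t-\ell\ge0$, so $N$ is a supermartingale, $\E[N_t]\le y$, and Fatou gives $\int x\,\nu(dx)=\E[Y_\tau]\le y$; case (iii) is the same applied to $-Y$; case (iv) imposes nothing. For sufficiency, case (iv) is exactly the fact, recalled in the introduction from Rost~\cite{Rost:71}, that every law on $\R$ embeds in Brownian motion on $\R$ (equivalently, an increasing limit of first‑exit times from intervals realising the successive balayages of $\delta_y$ towards $\nu$, which terminates by point‑recurrence; the time change transfers it to $Y$). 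Case (i), and the sub‑cases $\int x\,\nu(dx)=y$ of (ii) and (iii), are handled by the Chacon–Walsh construction run directly on $Y$: approximate the potential $U_\nu$ from above by affine functions decreasing to it, realise each balayage by a first‑exit time $H_{a,b}^Y$ (whose exit probabilities are the natural‑scale ones, $Y$ stopped between two reachable levels being a bounded martingale), and pass to the increasing limit; here $\tau<\infty$ a.s.\ and $Y_\tau\sim\nu$ because the stated condition forces $\nu\in L^1$, hence $U_\nu>-\infty$, and every point of $\supp\nu$ is reachable.

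The hard part will be the non‑balanced sub‑cases of (ii) and (iii) — say $I=(\ell,\infty)$ or $[\ell,\infty)$ with $\int x\,\nu(dx)<y$ — where one must build an embedding that places no mass at $\ell$ yet loses first moment, which is possible precisely because the supermartingale $Y$ on $I$ is not uniformly integrable. I would realise this by a potential‑theoretic construction adapted to the half‑line: fix $a$ just above $\ell$ (or $a=\ell$ if accessible) and levels $b_n\uparrow\infty$, run $Y$ to $H_a\wedge H_{b_1}$, and recurse on the event $\{Y=b_n\}$, choosing the $b_n$ so that the mass that would otherwise pile up at $a$ is carried off towards $+\infty$ and redistributed over $\supp\nu$. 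Verifying a.s.\ finiteness of the limiting stopping time, getting the bookkeeping exactly right, and handling the case where $\supp\nu$ is unbounded are where the real work lies; everything else (necessity, the balanced case, case (iv), and the time‑change reduction) is routine modulo the classical results cited. A shortcut for the whole of step three is instead to invoke Rost's potential criterion \cite{Rost:71} for $W$ on $I$ and check that it specialises to exactly (i)–(iv).
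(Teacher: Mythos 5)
You should first note that the paper does not actually prove Theorem~\ref{thm:existence}: it is quoted from Pedersen--Peskir and Cox--Hobson, and the text only records the idea behind the necessity direction (write $Y_t=W_{\Gamma_t}$ with $\Gamma_t\le H^W_{\ell,r}$; in case (i) $Y$ is a bounded martingale, in case (ii) a supermartingale, in case (iii) a submartingale). Your necessity argument via $N_t=Y_{t\wedge\tau\wedge H_{\ell,r}}$, Fatou, and the convention $Y_\infty=\lim_t Y_t$ at inaccessible endpoints is exactly this sketch, carried out correctly, and your time-change reduction and the Chacon--Walsh treatment of the balanced cases and of case (iv) are standard and sound.

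The genuine gap is the one you flag yourself: the non-balanced sub-cases of (ii) and (iii). Your proposed recursion with levels $b_n\uparrow\infty$ ``carrying mass off towards $+\infty$'' is left entirely unverified (a.s.\ finiteness of the limit, the bookkeeping, unbounded $\supp\nu$), so as written the sufficiency proof is incomplete precisely where the theorem has content. But the construction you are reaching for is unnecessary, and the paper itself points to the simple route in the discussion surrounding (\ref{eqn:taudef}): if $I=[\ell,\infty)$ or $(\ell,\infty)$ with $\ell>-\infty$ and $\overline{\nu}<y$, then $Y$ hits every level in $(\ell,y)$ almost surely (since $\Prob^y(H_a<H_b)=(b-y)/(b-a)\to 1$ as $b\uparrow\infty$), so $H^Y_{\overline{\nu}}<\infty$ a.s.\ whenever $\overline{\nu}>\ell$; one then sets $\tau=H^Y_{\overline{\nu}}+\tau^{\overline{\nu},\nu}\circ\Theta_{H^Y_{\overline{\nu}}}$ and applies your own balanced-case (Chacon--Walsh) construction started from $\overline{\nu}$. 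The degenerate case $\overline{\nu}=\ell$ forces $\nu=\delta_\ell$ and is covered by your endpoint convention. Note also that in case (ii) the hypothesis $y\ge\int x\,\nu(dx)$ together with $\nu$ supported in $[\ell,\infty]$ with $\ell>-\infty$ automatically gives $\nu\in L^1$ and $\nu(\{+\infty\})=0$, so the balanced construction applies without further conditions. With this substitution your argument closes; alternatively, as you say, one can simply invoke Rost's criterion, which is what the cited references effectively do.
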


The idea behind the proof is to write $Y$ as a time-change of Brownian motion, $Y_t = W_{\Gamma_t}$. Then, since $Y$ is absorbed at the endpoints we must have that $\Gamma_t \leq H^W_{\ell,r}$ for each $t$.

In the first case of the theorem $Y$ is a bounded martingale and $\E[Y_\tau] = y$ for any $\tau$. In the second case $Y$ is a local martingale bounded below and hence a supermartingale for which $\E[Y_\tau] \leq y$. In the third case $Y$ is a submartingale.

\begin{proposition}
\label{prop:minimal}
Suppose that at most one endpoint of $I$ is infinite. Then any embedding of $\nu$ on $int(I)$ is minimal.
\end{proposition}

\begin{proof}
We prove the result in the case $I=(\ell,\infty)$ or $[\ell,\infty)$ with $\ell > -\infty$. The other cases are similar.

Since $I$ has a finite endpoint, $Y$ is transient. Further, $Y$ is a supermartingale.

Let $\tau$ be an embedding of $\nu$ where $\overline{\nu} \leq y$. Let $\sigma \leq \tau$ be another embedding. Then, from the supermartingale property,
$\E[Y_\tau ; Y_\sigma \leq x] \leq \E[Y_\sigma; Y_\sigma \leq x]$ and since $Y_\sigma$ and $Y_\tau$ are equal in law,
\[ \E[x -Y_\tau ; Y_\sigma \leq x] \geq \E[x -Y_\sigma; Y_\sigma \leq x] = \E[x - Y_\tau ; Y_\tau \leq x] = \sup_{A} \E[x - Y_\tau;A] \]
Then, modulo null sets $(Y_\tau \leq x) = (Y_\sigma \leq x)$ and hence $Y_\sigma = Y_\tau$ almost surely.

Suppose $\sigma \leq \eta \leq \tau$. Then
\[ Y_\eta \geq \E[Y_\tau | \sF_\eta] = \E[Y_\sigma | \sF_\eta] = Y_\sigma,  \]
almost surely. But also $\E[Y_\eta - Y_\sigma] \leq 0$ since $Y$ is a 
supermartingale, and hence $Y_\eta = Y_\sigma$ almost surely. It follows that 
$Y$ is almost surely constant over the interval $[\sigma,\tau]$. But $Y$ is a 
time change of Brownian motion $Y_t = W_{\Gamma_t}$ for some strictly increasing 
time-change $\Gamma$. Brownian motion has no intervals of constancy, and hence 
nor does $Y$. It follows that $\sigma = \tau$ almost surely and hence $\tau$ is 
minimal.

\end{proof}

We close this section with a discussion of the Brownian case, including a partial proof of Theorem~\ref{thm:brownian}, followed by a discussion of the local martingale diffusion case.

For $W$ a Brownian motion null at 0, $W^2_{t \wedge \tau} - (t \wedge \tau)$ is a martingale and
\begin{equation}
\E[\tau] = \liminf \E[t \wedge \tau] =  \liminf \E[W^2_{t \wedge \tau}] \geq \E[\liminf W^2_{t \wedge \tau}] = \E[W_\tau^2].
\end{equation}
Moreover, from
Doob's $L^2$ submartingale inequality
we know that
$\E[\tau]<\infty$ if and only if $\E[(W_\tau^*)^2]<\infty$, and then $(W_{t \wedge \tau})_{t \geq 0}$ and $(W_{t \wedge \tau}^2)_{t \geq 0}$ are uniformly integrable. It follows that if $\E[\tau]<\infty$ then
\[ 0 = \lim \E[ W_{t \wedge \tau}] = \E[W_\tau] = \int x \mu(dx) \]
and
\[ \E[\tau] = \lim \E[t \wedge \tau] = \lim \E[ W_{t \wedge \tau}^2 ] = \E[W_\tau^2] = \int x^2 \mu(dx),\]
so that $\mu$ is centred and in $L^2$.

Conversely, if $\mu$ is centred and in $L^2$ then there are several classical constructions which realise an integrable embedding, including those of Skorokhod~\cite{Skorokhod:65} and Root~\cite{Root:69}.
See Obloj~\cite{Obloj:04} or Hobson~\cite{Hobson:11} for a discussion.

The final statement of Theorem~\ref{thm:brownian} is deeper, and follows from Theorem 5 of Monroe~\cite{Monroe:72}.
One of the main goals of this work is to extend the work of Monroe to general diffusions. Note that the arguments above yield that in the Brownian case if $\tau$ is an embedding of $\mu$ and $\E[\tau]<\infty$ then $\E[\tau] = \int x^2 \mu(dx)$, so that if $\mu$ is centred and in $L^2$ then every integrable embedding is minimal.

Consider now the case of a general diffusion $Y$
in natural scale. Suppose $Y_0=y=0$ and that $\nu$ is centred. Then to determine whether there might exist a integrable embedding we might expect to replace the condition $\int x^2 \mu(dx)<\infty$ of the Brownian case with some other integral test depending on the speed measure $m$ of $Y$ and the target measure $\nu$. Indeed we find this is the case with $x^2$ replaced by a convex function $q$ defined in (\ref{eqn:qdef}) in the next section.

But what if $\nu$ is not
centred? In the Brownian case there is no hope that the target law can be embedded in integrable time, not least because $\E[H^W_x] = \infty$ for each non-zero $x$, but what if $Y$ is some other diffusion?

Suppose the state space $I$ of $Y$ is unbounded above. Suppose $Y_0=y$ and $\nu \in L^1$ with $\overline{\nu} = \int x \nu(dx) < y$. (In this discussion we exclude the degenerate case where $Y$ is a point mass at $\ell$.) One candidate way to embed $\nu$ is to first wait until $H^Y_{\overline{\nu}} = \inf \{ t : Y_t = \overline{\nu} \}$ and then to embed $\nu$ in $Y$ started at $\overline{\nu}$, ie to set
\begin{equation}\label{eqn:taudef}
  \tau = H^Y_{\overline{\nu}} + \tau^{\overline{\nu}, \nu} \circ \Theta_{H^Y_{\overline{\nu}}}
\end{equation}
where $\Theta$ is the shift operator $\Theta_t ( \omega( \cdot)) = \omega(t+ \cdot)$ and $\tau^{\overline{\nu},\nu}$ is some embedding of $\nu$ in $Y$ started at $\overline{\nu}$. Note that since $I$ is unbounded above and $Y$ is a time-change of Brownian motion, it follows that $ H^Y_{\overline{\nu}}$ is finite almost surely. The embedding in (\ref{eqn:taudef}) will be integrable if {\em both} $H^Y_{\overline{\nu}}$ {\em and} $\tau^{\overline{\nu}, \nu}$ are integrable, and we can decide if it is possible to choose $\tau^{\overline{\nu}, \nu}$ integrable using the integral test of the centred case. Our results show that although embeddings of $\nu$ need not be of the form given in (\ref{eqn:taudef}), nonetheless there exist integrable embeddings if and only if both $\E[H^Y_{\overline{\nu}}]<\infty$
and there is an integrable embedding $\tau^{\overline{\nu},\nu}$ of $\nu$ in $Y$ started at $\overline{\nu}$.
In that case every minimal embedding has the same first moment.

\section{Every minimal embedding has the same first moment}
Let $Y$ be a regular diffusion in natural scale on $I \subseteq \R$. Suppose $Y_0 = y$. Let $m$ denote the speed measure of $Y$, define $q_u$ via
\begin{equation}\label{eqn:qdef}
  q_u(w) = 2 \int_u^w dv \int_u^v m(dz) = 2 \int_u^w m((u,v))dv
\end{equation}
and let $q = q_y$.
Then $q(Y_t) - t$ is a local martingale, null at zero. 

\begin{definition}
\label{def:E}
If $\nu \notin L^1$ set $E_Y(y;\nu)=\infty$. For $\nu \in L^1$ define
\begin{equation}
\label{eqn:Edefu}
E_Y(y;\nu) = \int q_y(z) \nu(dz) + |y - \overline{\nu}| \lim_{n \to \infty} \frac{ q_y( y + n \sign(y - \overline{\nu})) }{n}
\end{equation}
with the convention that $\sign(0)=0$. 
\end{definition}

In the case of a diffusion in natural scale, the main result of this paper is the following:
\begin{theorem}
\label{thm:martingale}
There exists an integrable solution of the SEP for $\nu$ in $Y$ if and only if $E_Y(y;\nu)<\infty$.
Further, in the case where $E_Y(y;\nu)<\infty$ we have that $\tau$ is minimal for $\nu$ if and only if $\tau$ is an embedding and $\E[\tau] = E_Y(y;\nu)$.
\end{theorem}

Our goal is to prove Theorem~\ref{thm:martingale}. In this section we suppose that $\nu \in L^1$ and $-\infty \leq \ell < y < r \leq \infty$.

\subsection{The centred case with support in a sub-interval}
Suppose $\nu$ is a measure with mean $y$ and support in a subset $[L,R] \subset (\ell,r)$ of $I$ where $L < y < R$.

\begin{lemma}
\label{lem:minimalLR}
Suppose $\tau \leq H_{L,R}$. Then $\tau$ is minimal for $\sL(Y_\tau)$ in $Y$ and $\E[\tau] = \E[q(Y_\tau)]$.
\end{lemma}

\begin{proof}
We have $Y_{t \wedge \tau}$ is bounded and $\E[Y_\tau] = y$. Also $q$ is
bounded on $[L,R]$. Hence
\[ \E[q(Y_\tau)] = \lim_t \E[q(Y_{t \wedge \tau})] = \lim_{t} \E[t \wedge
\tau] = \E[\tau]. \]
In general, from Fatou's Lemma we know that for any embedding $\chi$ of $\nu$,
\[ \E[\chi] = \lim_t \E[ \chi \wedge t]
\geq \lim_t \E[q(Y_{t \wedge \chi})] \geq \E[q(Y_\chi)] =
\int q(x) \nu(dx). \]
Then if $\chi \leq \tau$ and both $\chi$ and $\tau$ are embeddings of $\nu$, we must have $\chi = \tau$ almost surely. Hence $\tau$ is minimal. See also Proposition 4 in \cite{AnkirchnerHobsonStrack:13}.

\end{proof}

Suppose that $\sigma$ is an embedding of $\nu$.
Our goal is to show that there exists an embedding $\tilde{\sigma}$ of $\nu$ such that $\tilde{\sigma} \leq \sigma \wedge H_{L,R}$. Then $\tilde{\sigma}$ is minimal and $\E[\tilde{\sigma}] = \int q(x) \nu(dx)$. It follows that if $\sigma$ is minimal, then $\sigma = \tilde{\sigma}$ and $\E[\sigma]= \int q(x) \nu(dx)$.

 Following a definition of Root~\cite{Root:69}, we define a barrier to be a closed subset $B$ of $G=[0,\infty] \times [-\infty,\infty]$ such that $(\infty, x) \in B$ for all $x \in [-\infty,\infty]$, $(t, -\infty) \cup (t,\infty) \in B$ for all $t \in [0,\infty]$, if $(0,x) \in B$ for $x>y$ then $(0,x') \in B$ for $x'>x$, similarly if $(0,x) \in B$ for $x<y$ then $(0,x') \in B$ for $x'<x$ and finally if $(t,x) \in B$ then $(s,x) \in B$ for all $s>t$. 
Let $\sB$ be the space of all barriers and given $L,R$ with $\ell \leq L < y < R \leq r$ let $\sB_{L,R}$ be the set of all barriers $B$ with $(0,L)$ and $(0,R)$ in $B$, and then $(t,x) \in B$ for $(t \geq 0, x \leq L)$ and $(t \geq 0, x \geq R)$.

Let $\rho$ be the standard Euclidean metric on $\R^2$. 
We map $G$ into a bounded rectangle $F = [0,1] \times [-1,1]$ by $(t,x) \mapsto (t/(1+t),x/(1+|x|))$ and let $r$ be the induced metric on $G$ given by
\[ r((t,x),(s,y)) = \rho \left( \left( \frac{t}{1+t},\frac{x}{1+|x|} \right),\left( \frac{s}{1+s},\frac{y}{1+|y|} \right) \right) . \]
Now define the metric $r_\sG$ on the set $\sG$ of closed subsets of $G$ by 
\[ r_{\sG}(C,D) = \max \left\{ \sup_{(t,x) \in C} r((t,x),D)), \sup_{(s,y) \in D} r((s,y),C)) \right\}; \]
then $\sG$ is a separable compact space
and the spaces $\sB$ and $\sB_{[L,R]}$ are compact. For $B \in \sB$ define
\begin{equation*}
  \tau_{B} = \inf \{ t : (t, Y(t)) \in B \}.
\end{equation*}

\begin{lemma}%[Lemma 4 of Monroe]
\label{lem:m4} Suppose $\nu$ has mean $y$ and support in $[L,R]$. Suppose that $\sigma$ is an embedding of $\nu$.
Then there is a barrier $B \in \sB_{L,R}$ such that $\sigma \wedge \tau_B \leq H_{L,R}$ is a minimal embedding of $\nu$ and $\E[\sigma \wedge \tau_B] = \int q(x) \nu(dx)$.
\end{lemma}

\begin{proof}

First suppose $\nu$ puts mass on a finite subset of points in
$[L,R]$. In this case it is easy to prove the result by adapting the proof in Monroe~\cite{Monroe:72} which is based on topological arguments. We choose instead to give a more probabilistic proof.

Let $\nu$ be a measure on $n+2$ points.
Label the points $y_0 < y_1 < \cdots < y_n < y_{n+1}$.
Let $\sC = \{ b = (b_0, b_1, \ldots, b_{n+1}) \in \R^{n+2}_+ ; b_0 = 0 = b_{n+1} \}$.
Given $b \in \sC$ let $\eta_b$ be the law of $Y_{\tau(b)}$ where
\[ \tau(b) = \inf \{ u > 0 :  Y_u = y_k, u \geq b_k, \mbox{some $k \in \{ 0, 1, \ldots n, n+1 \}$} \} \]
and note that $\eta_b$ is a probability measure on the same points as $\nu$ with mean $y$. Let
\[ \sC_{\leq,\nu} = \{ b \in \sC : \eta_b(\{y_k\}) \leq \nu(\{y_k\}), 1 \leq k \leq n \}. \]

Suppose that $\gamma=(0,\gamma_1, \ldots, \gamma_{n},0)$ and $\lambda=(0,\lambda_1, \ldots \lambda_{n},0)$ are elements of $\sC_{\leq, \nu}$,
and consider $\gamma \wedge \lambda = (0,\gamma_1 \wedge \lambda_1, \ldots \gamma_{n} \wedge \lambda_{n},0)$.
Set $A = \{ k : \gamma_k <\lambda_k \}$. Then for $k \in A$, $\eta_{\lambda \wedge \gamma}(\{y_k\}) \leq \eta_\gamma(\{y_k\}) \leq \nu( \{y_k\})$ and for $k \in \{1, \ldots n \} \setminus A$, $\eta_{\lambda \wedge \gamma}(\{y_k\}) \leq \eta_\lambda(\{y_k\}) \leq \nu( \{y_k\})$. Hence $\gamma \wedge \lambda \in \sC_{\leq,\nu}$.

It follows that $\sC_{\leq, \nu}$ has a minimal element, $\underline{b}$ say and that $\eta_{\underline{b}}(\{y_k\}) \leq \nu( \{y_k \})$ for all $1 \leq k \leq n$. If $\eta_{\underline{b}}(\{y_j\}) < \nu( \{y_j\})$ for some $j$ then
by making the element of $\underline{b}$ with label $j$ smaller we can increase the mass embedded at $j$, without violating the constraint $\eta_{\underline{b}}(\{y_j\}) \leq \nu( \{y_j\})$, whilst simultaneously making
$\eta_{\underline{b}}(\{y_k\})$ smaller for each $k \in \{1, \ldots,n \} \setminus \{j \}$, thus contradicting the fact that $\underline{b}$ is a minimal element. Hence $\eta_{\underline{b}}(\{y_k\}) = \nu( \{y_k\})$ for all $k \in \{1, \ldots,n \}$. The fact that $\eta_{\underline{b}}$ and $\nu$ are equal follows from the fact that they are both probability measures with mean $y$.
Finally, let 
\[ B_\nu = ([0,\infty] \times [-\infty, y_0]) \cup \left( \cup_{i: 1 \leq i \leq n} \{ (s,y_i) ; s \geq b_i \} \right) \cup ([0,\infty] \times [y_{n+1} , \infty]). \]
Then $\tau_{B_{\nu}} \wedge \sigma \leq H_{y_0, y_{n+1}}$ and the result follows.

Now consider the general case of a measure $\nu$ on $[L,R]$ with mean $y$.
Let
\[ C_n = \{ k/n ; k = 0, \pm 1, \pm 2, \ldots, L < k/n < R \}  \cup \{L,R \}\]
and let $\sigma_n = \inf \{ t \geq \sigma :Y_t \in C_n \}$ and $\nu_n = \sL(Y_{\sigma_n})$. Then $\sigma_n$ is a stopping time and
$\nu_n$ has mean $y$ and finite support. By the study of the previous case there is a barrier $B_n$ such that $Y_{\tau_{B_n} \wedge \sigma_n}$ has law $\nu_n$ and $\tau_{B_n} \leq
H_{L,R}$. We want to show that down a subsequence $(B_n)_{n \geq 1}$ converges to a barrier $B$, $\tau_{B_n}$ converges almost surely to $\tau_B \leq H_{L,R}$ and $Y_{\sigma \wedge \tau_B} \sim \nu$.

By the compactness of $\sB_{[L,R]}$, $(B_n)_{n \geq 1}$ has a convergent subsequence. Let
$B$ be the limit. Moving to the subsequence, we may assume that $B_n \rightarrow
B$. Write $\tau_n$ as shorthand for $\tau_{B_n}$.

Note that $\E[H_{L,R}]$ is finite and choose $T > 2 \E[H_{L,R}] /\epsilon$; then
\[ \Prob( \tau_n \wedge \tau_B > T ) \leq
\frac{\E[\tau_n]}{T}  \leq \frac{\E[ H_{L,R} ]}{T}
< \frac{\epsilon}{2}  .\]
Fix $c > 0$. Choose
$\gamma>0$ such that
\[ \sup_{x \in [L,R]} \Prob^x \left[ \left( \sup_{\gamma < t < c} Y_t - x > \gamma \right)
\cap \left( \inf_{\gamma < t < c} Y_t - x < - \gamma  \right) \right] > 1 - \epsilon/2
\]
and $n_0$ such that
\[ \max \left\{ \sup_{(t,x) \in C_{n_0} }
\rho((t,x), B) , \sup_{(t,x) \in C}
\rho((t,x),\cup_{n \geq n_0} B_n ) \right\} < \gamma , \]
where $C_{n_0} = ([0,T] \times [L,R]) \cap ( \cup_{n \geq n_0} B_n)$ and
$C = ([0,T] \times [L,R]) \cap B$.
Then
\begin{equation*}
(|\tau_B - \tau_n| > c) \subseteq  (\tau_n \wedge \tau_B > T) \cup_{  (\tau_n \wedge \tau_B = t, Y_{\tau_n \wedge \tau_B} = x) \in [0,T] \times [L,R]} F(t,x)
\end{equation*}
where $F(s,y)$ is the set
\[ F(s,y) =\left( \left. \sup_{s+\gamma < t < s+c} Y_t - y \leq \gamma \right| Y_s = y\right)
\cup \left( \left. \inf_{s+ \gamma < t < s+ c} Y_t - y \geq - \gamma \right| Y_s = y\right).
\]

Clearly $\Prob(F(y,s)) \leq \epsilon/2$ for all $(y,s)$.
Hence by the Strong Markov property
\[ \Prob(|\tau_B - \tau_n| > c) < \epsilon, \]
and down a further subsequence if necessary, $\tau_n \rightarrow \tau_B$ almost surely.
Thus
\[ \sL(Y_{\sigma \wedge \tau_B}) = \lim_{n} \sL(Y_{\sigma_n \wedge \tau_n}) = \lim \nu_n = \nu.  \]

%Also
%\[ \E[\sigma \wedge \tau_B] = \E[ \liminf \sigma_n \wedge \tau_n] \leq \liminf \E[\sigma_n \wedge \tau_n] = %\liminf \int
%q(x) \nu_n(dx) = \int q(x) \nu(dx). \]
Also $\sigma \wedge \tau_B = \lim \sigma_n \wedge \tau_n \leq H_{L,R}$ so that $\sigma \wedge \tau_B$ is minimal and $\E[\sigma \wedge \tau_B] =\int q(x) \nu(dx)$.
\end{proof}

For a diffusion $Y$ with state space $I$, speed measure $m$ and initial value $Y_0=y$, and for a law $\nu$ on $[L,R]$ with mean $y$, we have that
%\begin{equation}\label{eqn:defE0}
  $E_Y(y;\nu) =  \int q_y(x) \nu(dx)$.
%\end{equation}
Clearly $E_Y(y;\nu) < \infty$ under the present conditions on $\nu$.

\begin{corollary}
  Suppose $\nu$ has mean $y$ and support in $[L,R] \subset (\ell,r)$. Then an embedding $\sigma$ of $\nu$ is minimal if and only if $\E[\sigma] = E_Y(y;\nu)$.
\end{corollary}
\begin{proof}
  By the first case of Theorem~\ref{thm:existence} there exists an embedding $\sigma$ of $\nu$ in $Y$, and then by Lemma~\ref{lem:m4} there exists a minimal embedding $\tilde{\sigma}= \sigma \wedge \tau_B$ with $\E[\tilde{\sigma}] = E_Y(y;\nu)$. If $\sigma$ is minimal then $\sigma = \tilde{\sigma}$ and
$\E[\sigma] = E_Y(y;\nu)$. Conversely, by the arguments at the end of Lemma~\ref{lem:minimalLR}, for any embedding $\E[\sigma] \geq E_Y(y;\nu)$ and so if
  $\E[\sigma] = E_Y(y;\nu)$ then $\sigma$ is minimal.
\end{proof}

\subsection{The general centred case}
Now suppose that $\nu$ is centred but that there is no subset $[L,R] \subset (\ell,r)$ for which $\nu([L,R])=1$.
We construct a sequence of measures $(\nu_n)_{n \geq n_0}$ with supports in bounded intervals $[L_n,R_n] \subset (\ell,r)$ and such that $(\nu_n)_{n \geq n_0}$ converges to $\nu$. Hence, given $\sigma$ and $\nu_n$ there
is a barrier $B_n$ with associated stopping time $\tilde{\sigma}_n = \tau_{B_n} \wedge \sigma$ such that $Y_{\tilde{\sigma}_n}$ has law $\nu_n$. For our specific choice of approximating sequence of measures we argue that the sequence of stopping times $\tau_{B_n}$ is monotonic increasing with limit $\tau_\infty$. Finally we show that $\sigma \wedge \tau_\infty$ is minimal and embeds $\nu$.

Recall that our current hypothesis is that $\nu$ is a measure on $\overline{I}$ such that $\nu \in L^1$ and $Y_0 = y = \overline{\nu}$. 

For a measure $\eta \in L^1$ with mean $c$ and support in $[\ell,r]$ define the 
potential $U_\eta : [\ell,r] \mapsto \R_+$ via $U_\eta(x) = \E^{Z \sim 
\eta}[|Z-x|]$. Let $\sV_{c}$ be the set of convex functions $f:[\ell,r] \mapsto 
\R$ satisfying $f(x) \geq |x-c|$, together with $\lim_{x \downarrow \ell} \{ 
f(x) - (c - x) \} = 0 = \lim_{x \uparrow r} \{ f(x) - (x-c) \}$. Then $U_\eta 
\in \sV_c$ and there is a one-to-one correspondence between elements of $\sV_c$ 
and probability measures on $[\ell,r]$ with mean $c$. For a pair of probability 
measures $\eta_i$ with support in $[\ell,r]$ we have that $\eta_1 \leq_{cx} 
\eta_2$ if and only if $U_{\eta_1}(x) \leq U_{\eta_2}(x)$ for all $x \in 
[\ell,r]$.

Given $\nu$, fix $n_0 \geq 1/U_\nu(\overline{\nu})$. For $n \geq n_0$ define $U_n : [\ell,r] \mapsto \R_+$ via
\[ U_n(x) = \max \{ U_\nu(x) - 1/n, |x- \overline{\nu}| \}, \]
and let $\nu_n$ be the probability measure with potential $U_n$. Then there exist $\{a_n,b_n\}$ such that
$[a_n,b_n] \subset (\ell,r)$, $\nu_n(A) = \nu(A)$ for all measurable subsets $A \subset (a_n,b_n)$ and
$\nu_n([\ell,a_n)) = 0 = \nu_n((b_n,r])$. Then $\nu_n$ has atoms at $a_n$ and $b_n$ and mean $\overline{\nu}$.
Further $(a_n)_{n\geq n_0}$ and $(b_n)_{n \geq n_0}$ are monotonic sequences and the family $(\nu_n)_{n \geq n_0}$ is increasing in convex order.

\begin{theorem}\label{thm:embeddingcentred}
Suppose $\nu \in L^1$ and $Y_0 = y =\overline{\nu}$.
Let $\sigma$ be an embedding of $\nu$. There exists an barrier $B$ such that $\tau_B \wedge \sigma$ also has law $\nu$ and $\E[\tau_B \wedge \sigma] = E_Y(y;\nu)$ where $E_Y(y;\nu)=\int q(y)\nu(dy)$.
\end{theorem}

\begin{proof}
For each $n$, fix $\nu_n$ as above. From our study of the bounded case we know there is a
barrier $B_n$ which we can assume contains $\{ (t,x), x \leq a_n \mbox{ or } x
\geq b_n \}$ such that $Y_{\tau_{B_n} \wedge \sigma}$ has law $\nu_n$.

We now show that if $p>n$ then $B_p \subset B_n$.

Let $\sB_n = \{ B \in \sB; \{(t,x): x \leq a_n \mbox{ or } x \geq b_n \}
\subseteq B, \sL(Y_{\tau_B \wedge \sigma}) \sim \nu_n \}$. We show that if
$n<p$, $B_n \in \sB_n$ and $B_p \in \sB_p$ then $B_n \cup B_p \in \sB_n$.
Certainly $\{(t,x): x \leq a_n \mbox{ or } x \geq b_n \} \subseteq B_n \cup
B_p$. Let $A_{n,p} = \{x: \inf \{ t : (t,x) \in B_n \} \leq \inf \{ t : (t,x)
\in B_p \} \}$.

Suppose $A \subset [a_n,b_n]$.
If $A \subset A_{n,p}$ and $Y_{\sigma \wedge \tau_{B_n \cup
B_p}}
\in A$ then $Y_{\sigma \wedge \tau_{B_n}} \in A$ and hence we have
\[ \nu_n(A) = \Prob(Y_{\sigma \wedge \tau_{B_n}} \in A)
            \geq \Prob(Y_{\sigma \wedge \tau_{B_n \cup B_p}} \in A).
\]
Conversely, if $A \subset A_{n,p}^c$, % = \emptyset$ %and $A \subset [a_n, b_n]$
\[ \nu_n(A) = \nu(A) = \nu_p(A) = \Prob(Y_{\sigma \wedge \tau_{B_p}} \in A)
            \geq \Prob(Y_{\sigma \wedge \tau_{B_n \cup B_p}} \in A). \]
Thus for every set $A \subset [a_n, b_n]$, $\nu_n(A)  = \Prob(Y_{\sigma \wedge
\tau_{B_n}} \in A) \geq \Prob(Y_{\sigma \wedge \tau_{B_n \cup B_p}} \in A)$.
Hence there must be equality throughout and $B_n \cup B_p \in \sB_n$.

Now fix a sequence $(B_n)_{n \geq 1}$ with $B_n \in \sB_n$. Let $\tilde{B}_n$
be the closure of
$\cup_{i=n}^{\infty} B_i$. We aim to show that $\tilde{B}_n \in \sB_n$.
For $k>n$ let
\[ B^k_n = \cup_{i = n}^k B_i. \]
By the arguments of the previous paragraphs $B^k_n \in \sB_n$.
Since the set of barriers is compact,
$B^{k}_n$
converges to $\tilde{B}_n$ as $k \uparrow \infty$ and $\tau_{B^{k}_n}
\downarrow
\tau_{\tilde{B}_n}$
(note that $\tau_{B^k_n} \leq T_{a_n,b_n} < \infty$).
Hence, since paths of $Y$ are continuous, $\nu_n =
\lim_k \sL(Y_{\sigma \wedge \tau_{B^{k}_n}})
= \sL(Y_{\sigma \wedge \tau_{\tilde{B}_n}})$ and $\tilde{B}_n \in \sB_n$.
It follows that for $p>n$,
$\tilde{B}_p \subset \tilde{B}_n$, and without loss of generality we
shall assume that $B_p \subset B_n$.

Define $B_\infty = \cap B_n$ and set $\tau_\infty = \tau_{B_\infty}$. Then $\tau_{B_n} \uparrow \tau_{\infty}$. Also
$\tau_{B_n} \wedge \sigma \uparrow \tau_{\infty} \wedge \sigma$ and
\[ \sL(Y_{\tau_{\infty} \wedge \sigma}) = \lim \sL(Y_{\tau_{B_n} \wedge \sigma})
= \lim \nu_n = \nu. \]

It only remains to prove that $\E[\sigma \wedge \tau_\infty] = E_Y(y;\nu)$. But
\begin{equation*}
  \E[\sigma \wedge \tau_\infty] = \lim \E[\sigma \wedge \tau_{B_n}] = \lim E_Y(y; \nu_n) = \lim \int q(z) \nu_n(dz) = \int q(z) \nu(dz) .
\end{equation*}

\end{proof}

\subsection{The uncentred case}    
Without loss of generality we may assume that the mean of $\nu$ satisfies $\overline{\nu} < y$. Then for there to be an embedding of $\nu$ we must have that $I$ is unbounded above.

Again we construct a sequence of measures $(\nu_n)_{n \geq n_0}$ with supports in bounded intervals $[L_n,R_n] \subset (\ell,r)$ and such that $(\nu_n)_{n \geq n_0}$ converges to $\nu$. 

Recall that $\nu$ is a measure on $\overline{I}$ such that $\nu \in L^1$.

Let $F_{\nu}$ be the distribution function of $\nu$ and $F^{-1}_\nu$ the inverse. In particular, if $U \sim U[0,1]$ then $F^{-1}_\nu(U)$ has law $\nu$.

Suppose $\ell > -\infty$.
Fix $n_0 > \max \{y, (y - \overline{\nu})^{-1} \}$ and for $n \geq n_0$ let $v_n = F_{\nu}(n-)$ and let $u_n$ solve $\int_{u_n}^{v_n} \max \{ F^{-1}_\nu(u) , ( \ell + 1/n ) \} du + n(u_n + 1 - v_n)=y$. 
Then $Z_n := F^{-1}_{\nu}(U) I_{ \{ u_n < U \leq v_n \} } + n I_{ \{ (U \leq u_n) \cup (U > v_n) \} }$ has mean $y$. Let $\nu_n$ be the law of $Z_n$.
Now set $b_n = n$ and $a_n = \max \{ F^{-1}_{\nu}(u_n), (\ell + 1/n) \}$. For $A \subseteq (a_n,b_n)$ we have $\nu_n(A) = \nu(A)$ and moreover $\nu_n([\ell,a_n)) = 0 = \nu_n((n,\infty])$. The measure $\nu_n$ has an atom at $n$ of size $u_n + (1-v_n)$ (and potentially an atom at $a_n$) and mean $\overline{\nu}$.
Further $(a_n)_{n\geq n_0}$ is a decreasing sequence and the family $(\nu_n)_{n\geq n_0}$ is increasing in convex order.

If $\ell = - \infty$ then we can construct $\nu_n$ using a similar but simpler 
argument which does not require moving mass from the interval $(\ell,\ell+1/n)$ 
to $\ell + 1/n$. Then $u_n$ solves $\int_{u_n}^{v_n} F_{\nu}^{-1}(u) du + n(u_n 
+1 - v_n) = y$ and $a_n = F_\nu^{-1}(u_n)$.

Recall the definition of $E_Y(y;\nu)$ in (\ref{eqn:Edefu}).
Since we are assuming that $\nu \in L^1$ and $\overline{\nu} < y$, and since
$\lim_{n \to \infty} \frac{ q_y( y + n) }{n} = m(y, \infty)$, this simplifies to
\begin{equation}{\label{eqn:Edefu2}}
 E_Y(y;\nu) = \int q_y(z) \nu(dz) + 2(y - \overline{\nu}) m(y,\infty)
\end{equation}

\begin{theorem}\label{thm:embeddingnoncentred} Suppose $\nu \in L^1$.
Let $\sigma$ be an embedding of $\nu$. There exists an barrier $B$ such that $\tau_B \wedge \sigma$ also has law $\nu$ and $\E[\tau_B \wedge \sigma] = E_Y(y;\nu)$.
\end{theorem}

\begin{proof} It only remains to cover the case where $Y_0 = y \neq \overline{\nu}$. We may assume $y > \overline{\nu}$.

For each $n$, fix $\nu_n$ as above. From our study of the bounded, centred case we know there is a
barrier $B_n$ which we can assume contains $\{ (t,x), x \leq a_n \mbox{ or } x
\geq b_n \equiv n \}$ such that $Y_{\tau_{B_n} \wedge \sigma}$ has law $\nu_n$.
Moreover, exactly as in the proof of Theorem~\ref{thm:embeddingcentred}, and with similar notation, it follows that if $p>n$ then $B_p \subset B_n$, that $\tau_{B_n} \uparrow \tau_\infty$ and that $\tau_\infty \wedge \sigma$ embeds $\nu$.

Finally we show that $\E[\sigma \wedge \tau_\infty] = E_Y(y;\nu)$.

Observe that $q$ is convex and so $\lim_n q(n)/n$ exists in $(0,\infty]$. Further
\[ y = \int x \nu_n(dx) = \int_{u_n}^{v_n} \max \{ F^{-1}_\nu(u), (\ell + 1/n) \} du + n (1 + u_n - v_n) \]
and hence $\lim_n n(1 + u_n - v_n)$ exists and is equal to $y - \overline{\nu}$. Then, as before
\begin{equation*}
  \E[\sigma \wedge \tau_\infty] = \lim \E[\sigma \wedge \tau_{B_n}] = 
\lim E_Y(y;\nu_n) = \lim \int q(z) \nu_n(dz)
\end{equation*}
but in this case
\begin{eqnarray*}
    % \nonumber to remove numbering (before each equation)
   \int q(x) \nu_n (dx)  &=& \int_{u_n}^{v_n} q(\max \{ F_{\nu}^{-1}(u), (\ell +1/n) \}) du + q(n) ( 1 - u_n - v_n) \\
      & \rightarrow& \int_{0}^{1} q(F_{\nu}^{-1}(u)) du + \lim_n \left\{ \frac{q(n)}{n} n( 1 + u_n - v_n) \right\} \\
      &=& \int q(x) \nu(dx) + (y - \overline{\nu}) \lim_n \left\{ \frac{q(n)}{n} \right\} \\
      & = & E_Y(y; \nu).
\end{eqnarray*}

\end{proof}

\begin{proof}[Proof of Theorem~\ref{thm:martingale} in the case $\nu \in L^1$]
If $E_Y(y;\nu)=\infty$ then since
any embedding has $\E[\sigma] \geq \E[\sigma \wedge \tau_\infty] = E_Y(y;\nu)$ there are no integrable embeddings.
Conversely, if $E_Y(y;\nu)<\infty$, then by Theorem~\ref{thm:embeddingcentred} or Theorem~\ref{thm:embeddingnoncentred} there exists an embedding $\tilde{\sigma}$ with $\E[\tilde{\sigma}] = E_Y(y; \nu)$.

Now suppose $E_Y(y;\nu)<\infty$ and $\sigma$ is an embedding of $\nu$.

Suppose $\sigma$ is minimal.
Choose $\nu_n$ as in the discussion before Theorem~\ref{thm:embeddingcentred} or 
Theorem~\ref{thm:embeddingnoncentred} as appropriate. In both of these 
theorems it was shown that we could choose a sequence of barriers
$B_n$
such that $\tau_{B_n} \wedge \sigma \rightarrow \tau_{B_\infty} \wedge \sigma$ and
$\tau_{B_\infty} \wedge \sigma$ embeds $\nu$. By minimality of $\sigma$,
$\tau_{B_\infty} \wedge
\sigma = \sigma$.
Then, since $\tau_{B_n} \wedge \sigma$ is
increasing,
\[ \E[\sigma] = \E[\tau_{B_\infty} \wedge \sigma] = \lim_n \E[ \tau_{B_n} \wedge \sigma ]
= \lim_n \int q(x) \nu_n (dx) = E_Y(y; \nu). \]

Conversely, if $\sigma$ is not minimal then there is an embedding $\hat{\sigma}$ of $\mu$ with $\hat{\sigma} \leq \sigma$, $\Prob(\hat{\sigma} < \sigma)>0$ and $\hat \sigma$ integrable. Then $\E[\sigma] > \E[\hat{\sigma}] \geq E(y;\nu)$.
\end{proof}

\begin{example}
The following example shows that unlike in the Brownian case, in general integrability alone is not sufficient for minimality.

Suppose the diffusion $Y$ solves $dY_t = (1+Y_t^2) dW_t$ subject to $Y_0=0$. Let 
$\nu = \frac{1}{2} \delta_{1} + \frac{1}{2} \delta_{-1}$ so that $\nu$ is 
uniform measure on $\{ \pm 1 \}$. Let $\hat{H} = H^Y_{1} \vee H^Y_{-1}$.
Then $\hat{H}$ embeds $\nu$ and $\E[\hat{H}] < \infty$, but 
$\hat{H}$ is not minimal since $\hat{H} > H^Y_{1} \wedge H^Y_{-1}$ which is also 
an 
embedding of $\nu$.

\end{example}

\begin{example}
This example gives another circumstance in which integrability is not sufficient to guarantee minimality.

Let $Y$ be a time-homogeneous martingale diffusion on $I=[\ell,r]$ with $-\infty < \ell < y < r < \infty$.
Suppose $\ell$ and $r$ are exit boundaries and that $\E[H^Y_{\ell,r}] < \infty$. We take $\ell$ and $r$ to be absorbing boundaries. (A simple example is obtained by taking Brownian motion started at $y$ and absorbed at $\ell$ and $r$.) Let $\nu = (r-y)/(r-\ell) \delta_\ell + (y - \ell)/(r-\ell) \delta_r$. Then for $c>0$, $H^Y_{\ell,r} + c$ is an integrable embedding which is not minimal.

However, examples of this type are degenerate and may easily be excluded by restricting the class of embeddings to those satisfying $\sigma \leq H^Y_{\ell,r}$.
\end{example}

\begin{example}
Now we give an example which shows that minimality alone is not sufficient for integrability.

Let $Y$ be geometric Brownian motion so that $Y$ solves $dY_t = Y_t dW_t$. Let $Y$ have initial value $Y_0 = 1$. It is easy to see that for $a \in (0,1]$ we have
\[
\E[H_a] = 2 \int_a^\infty [(z \wedge 1) - a] \frac{dz}{z^2} = 2 \log\left( 
\frac{1}{a} \right).
\]
Let $\nu = \delta_0$. Then $\tau = \infty$ is the minimal stopping time that embeds $\nu$ in $Y$. Obviously $\tau$ is not integrable.

More generally, let $\nu$ be any probability measure on $(0,1)$ with $\int \log 
y \; \nu(dy) = - \infty$, and let $Z$ be a random variable such that $\sL(Z) 
\sim \nu$. Let the filtration ${\mathbb F} = (\sF_t)_{t \geq 0}$ be such that 
$Z$ is $\sF_0$-measurable, and let $W$ be a ${\mathbb F}$-Brownian motion which 
is independent of $Z$.

Let $\tau = \inf \{ u \geq 0 : Y_u = Z \}$. Then $\tau$ is an embedding of 
$\nu$. Note that $\tau$ is a stopping time with respect to ${\mathbb F}$ but not 
with respect to the smaller filtration generated by $Y$ alone. Moreover, 
\[ \E[\tau] = -2 \int \log z \; \nu(dz) = \infty \]

Observe that $q_1(x) = \int_1^x \int_1^y \frac{2}{z^2} dz dy = 2(x-1) - 2 \log(x)$, and hence $\lim_{x \to \infty} q_1(x)/x = 2$. Therefore, for any law $\nu$ on $(0,1)$, for a minimal embedding
\[ \E[\tau] =  2 \int (x-1) \nu(dz) - 2 \int \log z \; \nu(dz) + 2(1 - \overline{\nu}) =  - 2 \int \log z \; \nu(dz). \]

We give another example of a minimal non-integrable embedding which does not require independent randomisation in the section on the Az\'ema-Yor stopping time.

Another feature of this example, is that $Y$ is a martingale and yet it is easy to construct examples with $\overline{\nu}<y$ for which there is an integrable embedding. Hence integrability and minimality of $\tau$ is not sufficient for uniform integrability of $(Y_{t \wedge \tau})_{t \geq 0}$.
\end{example}

%\begin{conjecture}
%If $Y$ is a martingale, then integrability and $\tau \leq H_{\ell,r}$ implies minimality.
%\end{conjecture}

%\begin{proof}
%We saw before (ie in my last document) that if $\nu$ is centred this result is true. So we just need to prove that if $M$ is a martingale then $\lim q(n)/n = \infty$. (Kotani?)
%\end{proof}

\section{Alternative characterisations of $E$}
\label{sec:alternative}
In the comments before Theorem~\ref{thm:martingale} we argued that in the non-centred case a natural family of embeddings was those which first involved waiting for the process to hit $\overline{\nu}$ and then to embed $\nu$ in $Y$ started at $\overline{\nu}$. For a stopping rule $\tau$ as given in (\ref{eqn:taudef}) we have from the analysis of the centred case that
\begin{equation}
\label{eqn:altE1}
 \E[\tau] = \E^y[H_{\overline{\nu}}] + E_Y(\overline{\nu};\nu)
 \end{equation}
 Now we want to show that the right hand side of (\ref{eqn:altE1}) is equivalent to the expression given in (\ref{eqn:Edefu}).

 More generally, for $v \in [\overline{\nu},y]$ we could imagine waiting for the process to hit $v$ and then using a minimal embedding time to embed $\nu$ in $Y$ started at $v$. Then we find
\begin{equation}
\label{eqn:altE2}
 \E[\tau] = \E^y[H_v] + E_{Y}(v;\nu)
 \end{equation}
 We want to show that the right-hand-side of (\ref{eqn:altE2}) does not depend on $v$.

 \begin{lemma}
For $v \in [\overline{\nu},y]$,
 \[ G(v) = 2\int_v^\infty ( y \wedge z - v ) m(dz) + \int q_{v}(z) \nu(dz) + (v - \overline{\nu}) \lim_{n \uparrow \infty} \frac{q_v(v+n)}{n} \]
 does not depend on $v$. In particular, for all $v \in [\overline{\nu},y]$,
 $E_{Y}(y, \nu) = \E^y[H_v] + E_{Y}(v;\nu)$. If this expression is finite for any (and then all) $v \in [\overline{\nu},y]$ we have that $\E[\tau] = \E^y[H_v] + E_Y(v;\nu)$.
 \end{lemma}

 \begin{proof}
 For any $u,v$,
 \[ q_u(z) = q_u(v) + q_v(z) + q'_u(v)(z-v) . \]
 Then, with $u = {\overline{\nu}}$, $q_v(z) = q_{\overline{\nu}}(z) - q_{\overline{\nu}}(v) + q'_{\overline{\nu}}(v)(v-z)$ and 
\begin{eqnarray*}
% \nonumber to remove numbering (before each equation)
  G(v) &=& 2\int_v^y (z - v ) m(dz) + 2(y-v) \int_y^\infty m(dz) + \int q_{\overline{\nu}}(z) \nu(dz) - q_{\overline{\nu}}(v) \\
       & & \hspace{5mm} + (v - \overline{\nu})q'_{\overline{\nu}}(v) + 2(v - \overline{\nu}) \int_v^\infty m(dz)  \\
       &=& 2(y- \overline{\nu}) \int_y^\infty m(dz) + \int q_{\overline{\nu}}(z) \nu(dz) + 2\int_{\overline{\nu}}^y (z - \overline{\nu}) m(dz)
\end{eqnarray*}
which does not depend on $v$.
\end{proof}

\section{Extensions}

\subsection{Non-integrable target laws}
We have seen that if $\nu \in L^1$ then there exists an integrable embedding of $\nu$ if $\E^y[H_{\overline{\nu}}]$ and $\int q_{\overline{\nu}}(x) \nu(dx)$ are both finite. In this short section we argue that if $Y_0= y \in (\ell,r)$ and $\nu \notin L^1$ then there does not exist an integrable embedding of $\nu$.

Note first that $q=q_y$ is non-negative and convex, and hence $q(x) \geq \alpha |x-y| - \beta$ for some pair of finite positive constants $\alpha, \beta$. Let $T_n$ be a localising sequence for the local martingale $\{ q(Y_{t \wedge \sigma}) - (t \wedge \sigma) \}_{t \geq 0}$. Then, by an argument similar to that in the proof of Lemma~\ref{lem:minimalLR}
\[ \E[\sigma] = \lim_n \E[ \sigma \wedge T_n] = \liminf \E[q(Y_{\sigma \wedge T_n})] \geq \E[ \liminf q(Y_{\sigma \wedge T_n})] = \int q(z) \nu(dz) = \infty. \]

\subsection{Diffusions started at entrance points}
\label{ssec:notL1}

In the proofs of the main results we assumed that $Y$ started at an interior point in $(\ell,r)$. Now we consider what happens if we start at a boundary point. The motivating example is a Bessel process in dimension 3 started at zero.

 After a change of scale we may assume that we are working with a diffusion in natural scale. Then, if the boundary point is finite and an entrance point, it must also be an exit point (for terminology, see Borodin and Salminen~\cite[Section II.6]{BorodinSalminen:02}).
 We have assumed exit boundary points to be absorbing. It follows that an entrance point must be infinite; without of generality we assume that $Y$ starts at $+\infty$ and that $I = (\ell,\infty)$ where we may have $\ell = -\infty$.

So suppose that $\infty$ is an entrance-not-exit point.  In particular, 
$\E^{\infty}[H_z]<\infty$ for some $z \in (\ell, \infty)$ or equivalently 
$\int^\infty z m(dz) < \infty$. We show that the results of previous sections 
pass over to this case with a small modification.

We suppose the initial sigma algebra $\sF_0$ is sufficiently rich as to include an independent, uniformly distributed random variable.

\begin{theorem}
\label{thm:entrance}
Suppose $Y$ is a diffusion in natural scale on $I = (\ell,\infty)$ and suppose $Y_0=\infty$, where $\infty$ is an entrance point.
Then there exists an integrable embedding of $\nu$ if and only if $E_Y(\infty;\nu)$ defined by
\begin{equation}
\label{eqn:Enatural}
 E_Y(\infty;\nu):= 2 \int^\infty_\ell \nu(dx) \int_x^\infty m(dz) (z-x)
 \end{equation}
is finite.
Furthermore,
if there exists an integrable embedding, then every minimal embedding $\sigma$ has $\E[\sigma] =E_Y(\infty;\nu)$.

\end{theorem}

\begin{remark}
\label{rem:entrance}
Note that $E_Y(\infty;\nu)$ can be rewritten as
\[ E_Y(\infty;\nu) = 2 \int^\infty_\ell m(dz) \int_\ell^z \nu(dx) (z-x) \]
It follows that if $\ell = - \infty$ and $\int_{-\infty}^{0} |x| \nu(dx) = \infty$ then $E_Y(\infty;\nu) = \infty$.

However, if $\nu$ has support in $[L,\infty]$ for $L>\ell$ or if $\int_{\ell} m(dz)$ and $\int^0_\ell |x| \nu(dx)$ are finite (the latter is always true if $\ell > -\infty$),
then it is possible to have $\nu \notin L^1$ and still have $E_Y(\infty;\nu)<\infty$ and the existence of integrable embeddings. For example, suppose $Y$ solves $dY_t = Y_t^2 dB_t$ subject to $Y_0 = \infty$ and suppose $\nu$ is a measure on $(0,\infty)$ with $\int_0^\infty x \nu(dx) = \infty$ and $\int_0^\infty \nu(dx) / x^2 < \infty$, eg $\nu([x,\infty))= x^{-1} \wedge 1$.
Then $E_Y(\infty;\nu) = \int x^{-2} \nu(dx)/3 < \infty$ but $\nu \notin L^1$.

Suppose instead that $\nu \in L^1$. Then as in Section~\ref{sec:alternative} we can rewrite $E_Y(\infty;\nu)$ as
\[ E_Y(\infty;\nu) = 2 \int_{\overline{\nu}}^\infty (y-\overline{\nu}) m(dy) + \int q_{\overline{\nu}}(y) \nu(dy) \]
This last expression as a clear interpretation as the sum of $\E^\infty[H_{\overline{\nu}}]$ and the expected time to embed law $\nu$ in $Y$ started at $\overline{\nu}$ using a minimal embedding. It follows that if $\nu \in L^1$ and there exists an integrable embedding of $\nu$ started at $\overline{\nu}$ then the stopping time `run until $Y$ hits the mean, and then use a minimal embedding to embed $\nu$ in $Y$ started from the mean' is a minimal and integrable embedding.
\end{remark}

\begin{proof}[Proof of Theorem~\ref{thm:entrance}]
Suppose first that $E_Y(\infty;\nu)$ is finite.
By assumption $\sF_0$ is sufficiently rich as to include a uniform random variable. (Note that if $\nu$ includes an atom at $\infty$ independent randomisation of this form will always be necessary to construct an embedding.) Then there exists a random variable $Z$ with law $\nu$ and setting $\sigma = \inf \{ u \geq 0; Y_u \leq Z \}$ we have $Y_\sigma \sim \nu$ and
\[ \E[\sigma] = \int \nu(dz) \E^{\infty}[H^Y_z] = 2 \int \nu(dz) \int_z^\infty (y-z) m(dy) = E_Y(\infty;\nu) . \]

If $\nu \in L^1$ then we do not need independent randomisation. In this case both $\E^{\infty}[H_{\overline{\nu}}]$ and $\int q_{\overline{\nu}}(y) \nu(dy)$ are finite (since $E_Y(\infty;\nu)$ is). Then there exists a minimal and integrable embedding $\tau^{\overline{\nu},\nu}$ of $\nu$ in $Y$ started at $\overline{\nu}$
and
\[ \tau = H_{\overline{\nu}} + \tau^{\overline{\nu},\nu} \circ \Theta_{H_{\overline{\nu}}} \]
is an integrable embedding.

Now suppose there is an integrable embedding. Then there exists an integrable minimal embedding $\sigma$ say. The remaining parts of the theorem will follow if we can show that $\E[\sigma] = E_Y(\infty;\nu)$.

So, suppose $\sigma$ is integrable and minimal.  Since $\infty$ is an entrance boundary, there exists $N$ such that $\E^{\infty}[H_N]< \infty$.

For $n \geq N$ 
let $\tilde{\sigma}_n = \max \{ \sigma, H_{n} \}$ and let $\nu_n = \sL(Y_{\tilde{\sigma}_n})$. Write $\tilde{\sigma}_n = H_{n} + \hat{\sigma}_n$ where $\hat{\sigma}_n = (\sigma - H_{n})^+$ and let $\hat{\nu}_n = \sL(Y^{n}_{\hat{\sigma}_n})$ where here the superscript reflects the fact that $Y$ starts at $n$.

First we argue that for each $n \geq N$, $\hat{\sigma}_n$ is minimal for $\hat{\nu}_n$ in $Y$ started at $n$. Suppose $\hat{\rho}_n \leq \hat{\sigma}_n$ also embeds $\hat{\nu}_n$ in $Y$ started from $n$. If $\rho$ is defined by
\[ \rho = \left\{ \begin{array}{ll} \sigma & \sigma < H_{n} \\
                                     H_{n} + \hat{\rho}_n & \sigma \geq H_n
                                     \end{array} \right. \]
then $\rho \leq \sigma$ and $Y_\rho \sim Y_\sigma$. By minimality of $\sigma$ we conclude that $\rho = \sigma$ and hence $\hat{\rho}_n = \hat{\sigma}_n$.

Since $\hat{\sigma}_n$ is minimal (and integrable, since $\sigma$ is integrable and $\E^{\infty}[H_n] \leq  \E^{\infty}[H_N] < \infty$) we have that
$\E^n [ \hat{\sigma}_n ] = E_Y(n,\hat{\nu}_n) = \int q_n(x) \tilde{\nu}_n(dx) + 2(n- \overline{\tilde{\nu}_n}) m((n,\infty))$.
Then
\begin{eqnarray}
\E^\infty [\sigma] & = & \lim_n \E[ (\sigma - H_n)^+] \nonumber \\
                  & = & 2 \lim_n \left\{ \int_{\ell}^{\infty} \tilde{\nu}_n(dx) \int_n^x m(dz) (x-z) + (n- \overline{\tilde{\nu}_n}) m((n,\infty)) \right\} \label{eqn:entrance}
\end{eqnarray}
Since $\infty$ is an entrance boundary $\int^{\infty} y m(dy) < \infty$ and hence $\lim_n n m((n,\infty) = 0$.
Further, since $\tilde{\nu}_n = \nu$ on $(-\infty,0)$, $\int_{-\infty}^0 |x| \tilde{\nu}_n(dx) < \infty$ if and only if  $\int_{-\infty}^0 |x| \nu(dx) < \infty$. But, if $\int_{-\infty}^0 |x| \nu(dx) = \infty$, then for any embedding $\rho$ of $\nu$ in $Y$ started at $\infty$ we have
\[ \E^\infty[\rho] > \E^\infty[(\rho - H_0)^+] > \int_{-\infty}^0 \nu(dx) q_0(x) = \infty \]
and hence there cannot be an integrable embedding of $\nu$. Since such an embedding exists by hypothesis, we must have $\int_{-\infty}^0 |x| \nu(dx)<\infty$. Then $\tilde{\nu}_n \in L^1$ and $\overline{\tilde{\nu}_n} \uparrow \overline{\nu} \in (-\infty,\infty]$. In particular, $\lim_n (n- \overline{\tilde{\nu}_n})m(n,\infty) \rightarrow 0$.

For the first term in (\ref{eqn:entrance}), since $\tilde{\nu}_n = \nu$ on $(\ell, n)$,
\begin{eqnarray*}
2\lim_n \left\{ \int_{\ell}^{\infty} \tilde{\nu}_n(dx) \int_n^x m(dz) (x-z) \right\} & \geq &
2\lim_n \left\{ \int_{\ell}^{n} \nu(dx) \int^n_x m(dz) (z-x) \right\} \\
 & = & 2\int_{\ell}^{\infty} \nu(dx) \int^\infty_x m(dz) (z-x),
\end{eqnarray*}
and conversely, since $\tilde{\nu}_n \leq \nu$ on $(n,\infty)$,
\begin{eqnarray*}
2\lim_n \left\{ \int_{\ell}^{\infty} \tilde{\nu}_n(dx) \int_n^x m(dz) (x-z) \right\}& \leq & 2 \lim_n \left\{  \int_{\ell}^{\infty} \nu(dx) \int_n^x m(dz) (x-z) \right\} \\
& = & 2 \int_{\ell}^{\infty} \nu(dx) \int^\infty_x m(dz) (z-x) .  %= E(\infty,\nu).
\end{eqnarray*}

\end{proof}

\section{Recovering results for general diffusions}

Let $X = (X_t)_{t \geq 0}$ be a time-homogeneous one-dimensional diffusion with state space $I^X$ and suppose $X$ solves $dX_t = a(X_t)dW_t + b(X_t)dt$ subject to $X_0=x$. Then provided $b/a^2$ and $1/a^2$ are locally integrable, $X$ has scale function $s=s^X$ and speed measure $m^X$ given by
\[ s'(z) = \exp \left( - \int^z \frac{2 b(v)}{a(v)^2} dv \right), \hspace{10mm} m^X(dz) = \frac{dz}{a(z)^2 s'(z)} .\]
Now let $Y = (Y_t)_{t \geq 0}$ be given by $Y_t = s^X(X_t)$. Then $Y$ is a diffusion in natural scale with state space $I = s^X(I^X)$ and speed measure
\[ m(dy) = m^X( d s^{-1}(y)) = \frac{dy}{a(s^{-1}(y))^2 s'(s^{-1}(y))^2} ,\]
so that for $[L,R] \subset I$, $m((L,R)) = m^X((s^{-1}(L), s^{-1}(R)))$.

Then $X_\tau \sim \mu$ is equivalent to $Y_\tau \sim \nu$ where $\nu(A) = \mu \circ s^{-1}(A)$.

We have that $\overline{\nu} := \int_I v \nu(dv) = \int_{I^X} s(z) \mu(dz)$ and $\int_I q_{s(x)}(v) \nu(dv) = \int_{I^X} q_{s(x)}(s(z)) \mu(dz)$. Moreover, $q_y(z) = 2 \int_y^z (z-w) m(dw) = 2 \int_{s^{-1}(y)}^{s^{-1}(z)} ( z - s(v) ) m^X(dv)$.

For definiteness suppose $s(x) \geq \overline{\nu}$, and denote by $r$ the upper limit of $I$ and by $r^X$ the upper limit of $I^X$. Then $r=\infty$ and
\begin{eqnarray*}
E_Y(s(x), \nu) & = & \int_I q_{s(x)}(z)\nu(dz) + 2(s(x) - \overline{\nu})m((s(x),r))    \\
& = & \int_{I^X} q_{s(x)}(s(z)) \mu(dz) + 2(s(x) - \overline{\nu})m^X((x,r^X))    \\
& = & 2 \int_{I^X} \left\{ \int_x^z (s(z)-s(v)) m^X(dv) \right\} \mu(dz) + 
2(s(x) - \overline{\nu})m^X(x,r^X)).
\end{eqnarray*}

In general therefore, for $x \in int(I^X)$ set
$E_X(x;\mu) = \infty$ if $\int_{I^X} |s(z)| \mu(dz) = \infty$ and otherwise
\begin{eqnarray}
\nonumber
E_X(x;\mu) & = &
2 \int_{I^X} \left\{ \int_x^z (s(z)-s(v)) m^X(dv) \right\} \mu(dz) \\
\label{eqn:EXdef} && \hspace{5mm} + 2|s(x) - \overline{\nu}| \left(m^X((x,r^X)) \sI_{ \{ s(x) > \overline{\nu} \} } +  m^X((l^X,x)) \sI_{ \{ s(x) < \overline{\nu} \} } \right) 
\end{eqnarray}
where $\sI$ is the indicator function.
As is the case for diffusions in natural scale, there is a second representation of $E_X$ in terms of the expected value of first hitting time of the weighted mean of the target law together with the expected value of an embedding in a process started at the weighted mean, namely
\begin{equation}
\label{eqn:EXdef2}
E_X(x;\mu) = \E^{x}[H^X_{s^{-1}(\overline{\nu})}] + \int q_{\overline{\nu}}(s(z)) \mu(dz).
\end{equation}
Note that in this expression $q$ is defined for the transformed process in natural scale.

\begin{proof}[Proof of Theorem~\ref{thm:general}]
$\tau$ is minimal for $\mu$ in $X$ started at $x$ if and only if $\tau$ is minimal for $\nu$ in $Y$ started at $y=s(x)$. Furthermore,
$\tau$ is an integrable embedding of $\mu$ if and only if $\tau$ is an integrable embedding of $\nu$. Then $\E[\tau]=E_Y(s(x);\nu) = E_X(x;\mu)$, where $E_X$ is defined in either (\ref{eqn:EXdef}) or (\ref{eqn:EXdef2}).
\end{proof}

\begin{example}
Suppose $P$ is a Bessel process of dimension 3, started at $p>0$. Then the scale function is $s(x)=-x^{-1}$ and $I=(-\infty,0)$. The speed measure is $m^P(dp) = p^2 dp$. There exists an embedding of $\mu$ in $Y$ if and only if $\overline{\nu} \geq - p^{-1}$ where $\overline{\nu} =  - \int_0^\infty x^{-1} \mu(dx)$.
Further, there exists an integrable embedding of $\mu$ if and only if $E_P(p;\mu)<\infty$ where
\begin{eqnarray*}
E_P(p;\mu) & = & \int_0^{\infty} \mu(dz) 2 \int_p^z \left( \frac{1}{v} - \frac{1}{z} \right)v^2 dv + 2 \left( \frac{1}{p} + \overline{\nu} \right) \frac{p^3}{3} \\
& = & \frac{1}{3} \int_{0}^{\infty} z^2 \mu(dz) - \frac{p^2}{3}
\end{eqnarray*}
\end{example}

\begin{example}
Suppose $X$ is given by $X_t=aW_t+ bt$ where $b>0$ and $W$ is standard Brownian motion, null at zero. Then $s(z) = - e^{-2bz/a^2}$ and $m^X(dz) = dx e^{2bz/a^2}/2b$. Set $\overline{\nu} = - \int_{\R} e^{-2bz/a^2} \mu(dz)$ and suppose $\overline{\nu} \in [-1,0]$, else there is no embedding.
Then $s^{-1}(\overline{\nu}) = - \frac{a^2}{2b} \log | \overline{\nu}|$ and $\exp( - \frac{2b}{a^2} s^{-1}(\overline{\nu})) = |\overline{\nu}|$. Hence
\begin{eqnarray*}
\int \mu(dz) q_{\overline{\nu}}(s(z)) & = &
\int \mu(dz) 2 \int_{s^{-1}(\overline{\nu})}^z (s(z) - s(v)) m^X(dv) \\
& = & \int \mu(dz) 2 \int_{s^{-1}(\overline{\nu})}^z (e^{-2bv/a^2} - e^{-2bz/a^2}) \frac{dv}{2b} e^{2bv/a^2} \\
& = & \int \mu(dz) 2 \int_{s^{-1}(\overline{\nu})}^z (1 - e^{2b(v-z)/a^2}) \frac{dv}{2b}  \\
& = & \int \mu(dz) \left\{ \frac{z}{b} - \frac{s^{-1}(\overline{\nu})}{b} - \frac{a^2}{2 b^2} + \frac{a^2}{2b^2} e^{2b(s^{-1}(\overline{\nu})-z)/a^2} \right\} \\
& = & \frac{1}{b} \int z \mu(dz) + \frac{a^2}{2b^2} \log |\overline{\nu}| - \frac{a^2}{2b^2} +\frac{a^2}{2b^2} \frac{1}{|\overline{\nu}|} \int e^{- 2bz/a^2} \mu(dz) \\
& = & \frac{1}{b} \int z \mu(dz) + \frac{a^2}{2b^2} \log |\overline{\nu}|.
\end{eqnarray*}

Suppose $X_0=x$. For $w>x$ we have $\E^x[H^X_w] = (w-x)/b$. Then, using (\ref{eqn:EXdef2}),
\[ E_X(x;\mu) = \E^x[H^X_{s^{-1}(\overline{\nu})}] + \int q_{\overline{\nu}}(s(z)) \mu(dz) = \frac{1}{b} \left( \int z \mu(dz) - x \right). \]
Recall from Proposition~\ref{prop:minimal} that every embedding of $\mu$ is minimal.
Then, for drifting Brownian motion, every embedding of $\mu$ has the same expected value.
\end{example}

\begin{remark}
Drifting Brownian motion was the subject of Grandits and Falkner~\cite{GranditsFalkner:00}, and the conclusion of the previous example is contained in their Proposition 2.2. Note that in the case $X_t = x + aB_t + bt$, if $\E[\tau]< \infty$ then $\E[X_\tau] - x = b \E[\tau]$. Hence, for an embedding $\tau$ of $\mu$ the result $\E[\tau]= E_X(x;\mu) = (\int z \mu(dz) - x)/b$ is not unexpected, and can be proved directly by other means.
\end{remark}

\section{Minimality and Integrability of the Az\'ema-Yor embedding}
Az\'ema and Yor~\cite{AzemaYor:79a,AzemaYor:79b} (see also Rogers and Williams~\cite[Theorem VI.51.6]{RogersWilliams:00b} and Revuz and Yor~\cite[Theorem VI.5.4]{RevuzYor:99}), give an explicit construction of a solution of the SEP for Brownian motion. The original paper~\cite{AzemaYor:79a} assumes the target law is centred and square integrable, but the $L^2$ condition is replaced with a uniform integrability condition in \cite{AzemaYor:79b}, see also \cite{RevuzYor:99}. Az\'ema and Yor~\cite{AzemaYor:79a} also indicate how the results can be extended to diffusions, provided that the process is recurrent and provided that once the process has been transformed into natural scale, the mean of the target law is equal to the initial value of the diffusion.

The Az\'ema-Yor stopping time for a centred target law $\nu$ in Brownian motion $W$ null at zero is
\begin{equation}
\label{tauAY}
 \tau^W_{AY,\nu} = \inf\{ u : W_u \leq \beta_{\nu} (J^W_u) \},
\end{equation}
where $J^W$ is the maximum process $J^W_u = \sup_{s \leq u} W_u$, and 
$\beta_{\nu}$ is the left-continuous inverse barycentre function, ie $\beta_\nu 
= b_{\nu}^{-1}$ where for a centred distribution $\eta$, $b_\eta(x) = \E^{Z \sim 
\eta}[ Z | Z \geq x]$. The Az\'ema-Yor embedding has become one of the canonical solutions of the SEP because it does not involve independent randomisation and because it is possible to give an explicit form for the stopping time. Further, amongst uniformly integrable (or equivalently minimal) solutions of the SEP for Brownian motion, the Az\'ema-Yor solution has the property that it maximises the law of the stopped maximum, ie for all increasing functions $H$, $\E[H(J^W_\tau)]$ is maximised over minimal embeddings $\tau$ of $\nu$ in $W$ by $\tau^W_{AY,\nu}$.

In the case where $\nu \in L^1$ but $\nu$ is not centred, Pedersen and Peskir~\cite{PedersenPeskir:01} make the simple observation that we can embed $\nu$ by first running the Brownian motion until it hits $\overline{\nu}$ and then embedding $\nu$ in Brownian motion started at $\overline{\nu}$ using the classical centred Az\'ema-Yor embedding, ie they propose
\[ \tau^W_{PP,\nu} = H^W_{\overline{\nu}} + \tau^W_{AY,\nu} \circ \Theta_{H^W_{\overline{\nu}}}. \]
However, if the Brownian motion is null at zero, and $\overline{\nu}<0$, then 
the embedding $\tau_{PP,\nu}$ no longer maximises the law of the stopped 
maximum. Instead Cox and Hobson~\cite{CoxHobson:04} introduce an alternative 
modificiation of the Az\'ema-Yor stopping time which does maximise the law of 
the stopped maximum, and it is this embedding which we will study here. In fact 
the expected value of any embedding of the form $H^Y_{\overline{\nu}} + 
\tau^{\overline{\nu},\nu} \circ \Theta_{H^Y_{\overline{\nu}}}$ can be found very 
easily, and our aim here is to analyse an embedding which is not of this form.

Suppose $W_0 = w$ and $\nu \in L^1$. Define $D_\nu(x) = \E^{Z \sim \nu}[(Z-x)^+] + (w - \overline{\nu})^+$ and for
$z \geq w$ set
\begin{equation}
\label{eqn:betadef}
 \beta_\nu(z) = \arg \inf_{v<z} \left\{ \frac{D_\nu(v)}{z-v} \right\}.
 \end{equation}
(Here the $\arg \inf$ may not be uniquely defined, but we can make the choice of $\beta_\nu$ unique by adding a left-continuity requirement.) 
Then the Cox-Hobson extension of the Az\'ema-Yor embedding is to set
\begin{equation}
\label{eqn:tauCH}
 \tau^W_{CH,\nu} = \inf\{ u : W_u \leq \beta_{\nu} (J^W_u) \} .
\end{equation}
Note that if $\overline{\nu} \geq w$, then for $z \in [w,\overline{\nu}]$ we have $\beta_\nu(z) = - \infty$. In this case the Cox-Hobson and Pedersen-Peskir embeddings are identical. However, if $\overline{\nu}<w$ then the Cox-Hobson and Pedersen-Peskir embeddings are distinct.

To ease the exposition we assume that $\nu$ has a density $\rho$. (The general case can be recovered by approximation, or by taking careful consideration of atoms.) Then $b = \beta_{\nu}^{-1}$ solves
\begin{equation}
\label{eqn:defbarycentre} 
(b(y) - y) \nu((y,\infty)) = D_{\nu}(y), 
\end{equation}
$b$ is differentiable and $\nu((y,\infty))b'(y) = (b(y)-y) \rho(y)$.
Then, writing $\tau$ for $\tau^W_{CH,\nu}$ and $L(\nu)$ for the lower limit of the support of $\nu$ and using excursion-theoretic arguments,
\begin{eqnarray*}
\Prob(W_\tau > y)  = \Prob(J^W_\tau > b(y)) & = & \exp \left( - \int_w^{b(y)} 
\frac{dz}{z - \beta(z)} \right) \\
& = & \exp \left( - \int_{w \vee {\overline{\nu}}}^{b(y)} \frac{dz}{z - \beta(z)} \right) \\
& = & \exp \left( - \int_{L(\nu)}^{y} \frac{b'(v)}{b(v)  - v} dv \right) \\
& = & \exp \left( - \int_{L(\nu)}^{y} \frac{\rho(v)}{\nu((v,\infty))} dv \right) = \nu((y,\infty))
\end{eqnarray*}
and hence $\tau^W_{CH,\nu}$ is an embedding of $\nu$.

Cox and Hobson~\cite{CoxHobson:06} prove that the embedding in (\ref{eqn:tauCH}) is minimal. A bi-product of the subsequent arguments in this section is a proof of minimality by different means. Note that this is only relevant in the case $I=\R$, else every embedding is minimal.

Let $Y$ be a regular diffusion in natural scale. Then by the Dambis-Dubins-Schwarz theorem $Y$ can be written as a time-change of Brownian motion: $Y_t = W_{[Y]_t}$ for some Brownian motion (on a filtration and probability space constructed from the original space supporting $Y$). Then if we set $Q = [Y]^{-1}$ we have $W_t = Y_{Q_t}$. Conversely, let $W$ be Brownian motion and let $(L^W_t(z))_{t\geq 0,z\in \R}$ be its family of local times. Given a measure $m$  on $I$ (with a strictly positive density with respect to Lebesgue measure), set $A_s = \int_I m(dz) L^W_s(z)$. Then $A$ is strictly increasing  and continuous (at least until $W$ hits an endpoint of $I$) and we can define an inverse $\Gamma = A^{-1}$.
Finally set $Y_t = W_{\Gamma_t}$; then $Y$ is a diffusion in natural scale with speed measure $m$.

It follows that if $\tau$ is a solution of the SEP for $\nu$ in $W$ then $Q_\tau$ is a solution of the SEP for $\nu$ in $Y$. Similarly, if $\sigma$ is the solution of the SEP in $Y$, then $\Gamma_\sigma$ is a solution of the SEP in $W$.
Hence there is a one-to-one correspondence between solutions of the SEP for 
$\nu$ in $W$ and solutions for $\nu$ in $Y$.

Recall that we are supposing that $\nu \in L^1$. (Note that if $\nu \notin L^1$ then it is not possible to define $D_\nu(\cdot)$, and the Az\'ema-Yor solution is not defined.) Suppose also
that $w>\overline{\nu}$, which is the interesting case in which the Pedersen-Peskir and Cox-Hobson embeddings are distinct.
By analogy with (\ref{eqn:tauCH}) define
\begin{equation}
\label{eqn:tauY}
 \tau^Y_{CH,\nu} = \inf\{ u : Y_u \leq \beta_{\nu} (J^Y_u) \}
\end{equation}
where $\beta_\nu$ is as defined in (\ref{eqn:betadef}). Then $\tau = \tau^Y_{CH,\nu}$ inherits the embedding property from $\tau^W_{CH,\nu}$ and is a solution of the SEP for $\nu$ in $Y$.

Now consider the question of minimality. It is clear that $\tau^W_{CH,\nu}$ is minimal for $\nu$ in $W$ if and only if $\tau$ is minimal for $\nu$ in $Y$. If $\overline{\nu} \neq y$ then $\tau^W_{CH,\nu}$ is not integrable, but $\tau$ may be integrable. Further, if $\tau$ is integrable for $\nu$ in $Y$ started at $w$ and if $E_Y(w;\nu)<\infty$ then $\tau$ is minimal if and only if $\E[\tau] = E_Y(w;\nu)$.
In particular, if we choose the diffusion $Y$ so that its speed measure satisfies $m(\R)<\infty$, then necessarily $E_Y(w;\nu)<\infty$ (recall $\nu \in L^1$). The minimality of $\tau$ for $\nu$ in $Y$ and hence the minimality of $\tau^W_{AY,\nu}$ will follow if we can show $\E[\tau]=E_Y(w;\nu)$.

We have, (recall $w > \overline{\nu}$),
\begin{eqnarray*}
\E[\tau] & = & \int_w^\infty dz  \Prob(J^Y_\tau \geq z) \int_{\beta(z)}^z \frac{2(x - \beta(z))}{z-\beta(z)} m(dx) \\
         & = & 2 \int_{\R} \frac{b'(y)}{(b(y)-y)} dy \Prob(Y_\tau \geq y) \int_y^{b(y)} (x-y) m(dx) \\
         & = & 2 \int_{\R} \rho(y) dy \int_y^{b(y)} (x-y) m(dx) \\
         & = & 2 \int_{-\infty}^w m(dx) \int_{-\infty}^x (x-y) \rho(y) dy + 2 \int_w^\infty m(dx) \int_{\beta(x)}^x(x-y) \rho(y) dy.
         \end{eqnarray*}
Here we use excursion theory and
the fact that
\[ \E^x[H^Y_{a,b}] = 2 \int_a^b (x\wedge z-a)(b- x \vee z) m(dz) \hspace{10mm} a<x<b \]
for the first line (see also Pedersen and Peskir~\cite[Theorem 4.1]{PedersenPeskir:98}), $(J^Y_\tau \geq z)= (Y_\tau \geq \beta(z))$ for the second line, $b'(y) = \rho(y) (b(y)-y) / \nu((y,\infty))$ almost everywhere for the third, and the fact that $b(y) \geq w$ for the final line.

Observe that
\[ 2\int_{-\infty}^w m(dx) \int_{-\infty}^x (x-y) \nu(dy) = 2\int_{-\infty}^w \nu(dy) \int_{y}^w (x-y) m(dx) = \int_{-\infty}^w \nu(dy) q_{w}(y) . \]
Note that it is no longer true that $b = b_{\nu} =\beta_{\nu}^{-1}$ satisfies $b(y) = \E^{Y \sim \nu}[Y | Y \geq y]$ but rather $b(y) = \{(w - \overline{\nu}) + \int_y^\infty z \nu(dz) \}/(\nu(y,\infty))$ and then $(x - \beta(x)) \int_{\beta(x)}^\infty \nu(dz) = w - \overline{\nu} + \int_{\beta(x)}^\infty ( z - \beta(x)) \nu(dz)$. Thus
\[ \int_{\beta(x)}^x(x-y) \nu(dy) = \int_{\beta(x)}^\infty (x-y) \nu(dy) + \int_x^\infty (y-x) \nu(dy)  = (w - \overline{\nu}) + \int_x^\infty (y - x) \nu(dy), \]
and
\[ 2 \int_w^\infty m(dx) \int_{\beta(x)}^x(x-y) \nu(dy) = 2 ( w - \overline{\nu}) m((w,\infty)) + \int_{w}^\infty q_{w}(y) \nu(dy). \]

Finally then,
\[ \E[\tau] 
= 2 ( w - \overline{\nu}) m((w,\infty)) + \int q_{w}(y) \nu(dy) =
E_{Y}(w;\nu) \]
and hence $\tau$ and $\tau^W_{CH,\nu}$ given in (\ref{eqn:tauCH}) are minimal.

\subsection{An example}
In this example we suppose $Y$ is a non-negative, regular, local-martingale diffusion started at 1 with state space unbounded above and absorbed at zero (if $Y$ can hit zero in finite time, else $Y$ is assumed to be transient to zero). We suppose further that $\nu$ is the given by $\nu((y,\infty)) = (1 + \theta y)^{-\phi}$ with $\theta,\phi>0$ and $\phi \geq 1+1/\theta$. If $\phi = 1 + 1/\theta$ then $\overline{\nu}=1$, otherwise if $\phi>1 + 1/\theta$ then $\overline{\nu}<1$. (Note that if $\phi < 1 + 1/\theta$, then $\overline{\nu}>1$ and there is no embedding of $\nu$ in $Y$.)

Our first goal is to find the function $\beta_\nu$ in the Cox-Hobson extension of the Az\'ema-Yor embedding and the associated stopping times. In fact we find a family of solutions parameterised by $\psi \in [\overline{\nu},1]$ for which the stopping time 
with parameter $\psi$ corresponds to running $Y$ until it his $\psi$ and then embedding $\nu$ in $Y$ started at $\psi$ using the Cox-Hobson embedding. In particular this stopping time can be written as
\[ H^Y_\psi + \tau^\psi \circ \Theta_{H^Y_\psi} \]
where 
\[ \tau^\psi = \inf \{ u \geq 0 ; Y^\psi_u \leq \beta_{\nu, \psi} (J^{Y^\psi}_u) \} \]
and $Y^\psi$ satisfies $Y^\psi_0 = \psi$.
Here, for $\psi \in [\overline{\nu},1]$, $D_{\nu, \psi}(z) = \E^{Z \sim \nu}[(Z-x)^+] + (\psi - \overline{\nu})$ is given by
\[ D_{\nu, \psi}(z) = \psi  - \frac{1}{\theta(\phi-1)} \left\{ 1 - (1+\theta y)^{-(\phi - 1)} \right\} \]
and $b = \beta_{\nu,\psi}^{-1}$ given by (\ref{eqn:defbarycentre}) has expression
\[
 b(y) = (1+ \theta y)^\phi \left( \psi - \frac{1}{\theta(\phi-1)} \right) + \frac{\phi y}{\phi - 1} + \frac{1}{\theta(\phi-1)}  . \]

Now suppose $m(dy) = y^{-2c}dy$ (with $c \in (0,\infty) \setminus \{1/2,1 \}$) so that $Y$ solves $dY = Y^{c} dW$. Then $q = q_1$ is given by  $q(x) = \frac{x^{2-2c} - 1}{(1-c)(1-2c)} - \frac{2 (x-1)}{(1-2c)}$. We have
\[ E_Y(1;\nu) = \int_0^\infty  q(y) \nu(dy) + 2(1 - \overline{\nu}) m((1,\infty)) \]
Suppose $\phi > 1 + 1/\theta$.
Then $\overline{\nu}<1$ and there exists an integrable embedding of $\nu$ if and only if each of the three integrals
\[ \int^{\infty} x^{-2c} dx, \hspace{15mm} \int^\infty x^{2-2c} x^{-(\phi + 1)} dx, \hspace{15mm}
\int_0  x^{2-2c} dx  \]
is finite or equivalently $c>1/2$, $c>1 - \phi/2$ and $c<3/2$. However, since $\phi \geq 1 + 1/\theta>1$ this reduces to $1/2 < c < 3/2$.

If $\phi = 1 + 1/\theta$ then there is no requirement for $m((1,\infty))$ to be finite, the condition $c>1/2$ is not needed and there exists an integrable embedding of $\nu$ if and only if $1  - \phi/2 < c < 3/2$.

These statements are consistent with the case $c=0$ of absorbing Brownian motion. Then $\nu$ can be embedded in integrable time if an only if $\overline{\nu}=1$ and $\nu \in L^2$, or equivalently $\phi = 1+1/\theta$ and $\phi>2$.

\subsection{An example of Pedersen and Peskir}
Pedersen and Peskir~\cite{PedersenPeskir:01} give the expected time for a Bessel process to fall below a constant multiple of the value of its maximum, ie they find $\E[\tau^P_{AY}]$ where $\tau^P_{AY} = \inf \{ u > 0 : P_u \leq \lambda J^P_u \}$ and $\lambda < 1$. They find the answer by solving a differential equation subject to boundary conditions and a minimality principle.
We can recover their result directly using our methods.

Let $P$ be a Bessel process of dimension $\alpha \neq 2$, started at 1. Then $Y=P^{2-\alpha}$ is a diffusion in natural scale. Then $P$ solves $dY_t = (2-\alpha) Y_t^b dW_t$ where $b = (1-\alpha)/(2-\alpha)$.
Then $m(dy) = (2 -\alpha)^{-2} y^{-2b} dy$ and
\[ q_1(y) = \frac{1}{(2 - \alpha)^2} \left[ \frac{y^{2(1-b)}}{(1-b)(1-2b)} + \frac{1}{1-b} - \frac{2y}{1-2b} \right]. \]

Suppose first $\alpha<2$. We find, with $J_u = J^Y_u = \sup_{s \leq u} Y_u$,
\[ \tau^P_{AY} = \inf \{ u > 0 ;Y^{1/(2-\alpha)}_u \leq \lambda J_u^{1/(2 - \alpha)} \} = \inf \{ u > 0 ;Y_u \leq \gamma J_u \} =: \tau^\gamma \]
where $\gamma = \lambda^{2-\alpha}$.
Then, for $y \geq \gamma$,
\[
\Prob(Y_{\tau^{\gamma}} \geq y)  = \Prob(J_{\tau^\gamma} \geq y/\gamma)
 =  \exp \left( - \int_1^{y/\gamma} \frac{dj}{(j - \gamma j)} \right)
 =  (y/\gamma)^{-1/(1-\gamma)}.
\]

Then, if $\nu =\sL(Y_{\tau^\gamma})$ we have $\overline{\nu}=1$ and
\[ \E[ \tau^\gamma ] = \int_{\gamma}^\infty q_1(y) \nu(dy)
= \frac{\lambda^\alpha (2 -\alpha)}{\alpha(2 - \alpha \lambda^{\alpha - 2})} - \frac{1}{\alpha}\]
provided $\alpha \lambda^{\alpha - 2} < 2$, and otherwise $\tau^\gamma$ is not integrable.

If $\alpha > 2$ then set $Y = -P^{2-\alpha}$. Then $\tau^P_{AY} = \inf \{ u > 0 
: Y_u \leq \gamma {J}_u \} =: \tau^\gamma$ where $\gamma = 
\lambda^{2-\alpha}>1$. Then for $y \in (-\gamma,0)$, $\Prob(Y_{\tau^\gamma} \geq 
y) = (|y|/\gamma)^{1/(\gamma-1)}$. Again we find that $\nu \sim 
\sL(Y_{\tau^\gamma})$ has unit mean and
\[ \E[\tau^\gamma] = \frac{\lambda^\alpha(\alpha - 2)}{\alpha(\alpha \lambda^{\alpha - 2} - 2)} - \frac{1}{\alpha} ,\]
provided $\alpha \lambda^{\alpha - 2} > 2$,
else $\tau^{\gamma}$ is not integrable.

Finally, if $\alpha=2$, we set $Y = \log P$ and then $\tau^P_{AY} = \inf \{u>0: Y_u \leq J_u - \gamma \} =: \tau^\gamma$ where $\gamma = - \log \lambda > 0$. Then, for $y \geq - \gamma$,  $\Prob(Y_\tau \geq y) = e^{-(y/\gamma) - 1}$. Further, $dY_t = e^{-Y_t} dB_t$ and if $P_0=1$ then $Y_0=0$. Then $m(dy) = e^{2y} dy$ and
$q_0(y) = \{ e^{2y} - 2y - 1 \}/2$. Hence
\[ \E[ \tau^\gamma ] = \int_{-\gamma}^\infty q_0(y) \nu(dy) = \int_{-\gamma}^{\infty} \frac{e^{2y - (y/\gamma) - 1}}{2 \gamma} dy - \frac{1}{2} = \frac{\lambda^2}{2 + 4 \log \lambda} - \frac{1}{2} \]
provided $\lambda > e^{-1/2}$, and otherwise $\tau^\gamma$ is not integrable.

\bibliographystyle{plain}
\bibliography{general}

\end{document}